\newtheorem{theorem}{Theorem}[section]
\newtheorem{pro}[theorem]{Proposition}
\newtheorem{co}[theorem]{Corollary}
\theoremstyle{definition}
\newtheorem{de} [theorem]{Definition}
\theoremstyle{remark}
\newtheorem{rem}[theorem]{Remark}
 \numberwithin{equation}{section}
\begin{document}

\title[The ridgelet transform of distributions]{The ridgelet transform of distributions}

\author[S. Kostadinova]{Sanja Kostadinova}
\address{Faculty of Electrical Engineering and Information Technologies, Ss. Cyril and Methodius University, Rugjer Boshkovik bb, 1000 Skopje, Macedonia}
\email{ksanja@feit.ukim.edu.mk}

\author[S. Pilipovi\'{c}]{Stevan Pilipovi\'{c}}
\address{Department of Mathematics and Informatics, University of Novi Sad, Trg Dositeja Obradovi\'ca 4, 21000 Novi Sad, Serbia}
\email {stevan.pilipovic@dmi.uns.ac.rs}

\author[K. Saneva]{Katerina Saneva}
\address{Faculty of Electrical
Engineering and Information Technologies, Ss. Cyril
and Methodius University, Rugjer Boshkovik bb, 1000 Skopje, Macedonia}
\email{saneva@feit.ukim.edu.mk}

\author[J. Vindas]{Jasson Vindas}
\address{Department of Mathematics, Ghent University, Krijgslaan 281 Gebouw S22, 9000 Gent, Belgium}
\email{jvindas@cage.Ugent.be}

\subjclass[2010]{Primary 44A15, 46F12. Secondary 42C20, 44A12, 	44A35}
\keywords{Ridgelet transform; Radon transform; wavelet transform; distributions}

\begin{abstract}
We define and study the ridgelet transform of (Lizorkin) distributions. We establish connections with the Radon and wavelet transforms. 
\end{abstract}

\maketitle

\section{Introduction}

In  \cite{candes1,candes3} Cand\`{e}s introduced and studied the continuous ridgelet transform. He developed a harmonic analysis groundwork for this transform and showed that it is possible to obtain constructive and stable approximations of functions by ridgelets. Ridge functions often appear in the literature of approximation theory, statistics, and signal analysis.   One of the motivations for the introduction of the ``X-let'' transforms, such as the ridgelet or curvelet transforms, comes from the search of optimal representations of signals in high-dimensions. Wavelets are very good in detecting point singularities, but they have several difficulties in localizing edges of higher dimension \cite{candes2}. The ridgelet transform is more sensitive to higher dimensional discontinuities, as it essentially projects a hyperplane singularity into a point singularity and then takes a one-dimensional wavelet transform.

In this paper we provide a thorough analysis of the ridgelet transform and its transpose, called here the ridgelet synthesis operator, on various test function spaces. Our main results are continuity theorems on such function spaces (cf. Section \ref{ridgelet test functions}). We then use our results to develop a distributional framework for the ridgelet transform. Distribution theory is a power tool in applied mathematics and the extension of integral transforms to generalized function spaces is an important subject with a long tradition. 
It should be noticed that Roopkumar has proposed a different definition for the ridgelet transform of distributions \cite{RT,roopkumar}; however, his work contains several major errors (see Remark \ref{remark Roopkumar} below).

Let $\mathbb{Y}^{n+1}=\mathbb{S}^{n-1}\times\mathbb{R}\times\mathbb{R}_{+}$, where $\mathbb{S}^{n-1}$ is the unit sphere of $\mathbb{R}^{n}$. Cand\`{e}s showed \cite{candes3} that the ridgelet transform $\mathcal{R}_{\psi}:L^{2}(\mathbb{R}^{n})\to L^{2}(\mathbb{Y}^{n+1})$ is a constant multiple of an isometry, provided that $\psi$ is admissible (cf. Section \ref{extended formulas}). We will show in Section \ref{ridgelet distributions} that the ridgelet transform and the ridgelet synthesis operator can be extended as continuous mappings
$\mathcal{R}_{\psi}:\mathcal{S}'_{0}(\mathbb{\mathbb{R}}^{n})\to\mathcal{S}'(\mathbb{Y}^{n+1})$ and $\mathcal{R}_{\psi}^{t}:\mathcal{S}'(\mathbb{Y}^{n+1})\to \mathcal{S}'_{0}(\mathbb{\mathbb{R}}^{n})$. Here $\mathcal{S}'(\mathbb{Y}^{n+1})$ is a certain space of distributions of slow growth on $\mathbb{Y}^{n+1}$ and $\mathcal{S}'_{0}(\mathbb{R}^{n})$ stands for the Lizorkin distribution space (cf. Subsection \ref{spaces}). We remark that the Lizorkin spaces play a key role in Holschneider's approach to the wavelet transform of distributions \cite{hols}. Many important Schwartz distribution spaces, such as $\mathcal{E}'(\mathbb{R}^{n})$, $\mathcal{O}'_{C}(\mathbb{R}^{n})$, $L^{p}(\mathbb{R}^{n})$, or the $\mathcal{D}'_{L^{p}}(\mathbb{R}^{n})$ spaces, are embedded into $\mathcal{S}'_{0}(\mathbb{R}^{n})$. 

The ridgelet transform of distributions must be more carefully handled than the wavelet transform. While the wavelet transform of a distribution can be defined by direct evaluation of the distribution at the wavelets, this procedure fails for the ridgelet transform because the ridgelets do not belong to the Schwartz class $\mathcal{S}(\mathbb{R}^{n})$. The larger distribution space where the direct approach works is $\mathcal{D}'_{L^{1}}(\mathbb{R}^{n})$ (cf. \ref{ridgelet transform functions}). We treat the ridgelet transform on $\mathcal{S}'_{0}(\mathbb{R}^{n})$ via a duality approach. The crucial continuity results for test function spaces are given in Section \ref{ridgelet test functions}.

The ridgelet transform is intimately connected with the Radon and wavelet transforms. Helgason \cite{helgason} proved range theorems for the Radon and dual Radon transforms on the Lizorkin test function spaces $\mathcal{S}_{0}$. In Section \ref{section Radon transform} we apply our continuity theorems for the ridgelet transform to discuss the continuity of the Radon transform on these spaces and their duals. The Radon transform on Lizorkin spaces naturally extends the one considered by Hertle \cite{hertle2} on various distribution spaces.  We use in Section \ref{section desingularization} ideas from the theory of tensor products of topological vector spaces to study the relation between the distributional ridgelet, Radon, and wavelet transforms. We conclude this article with a desingularization formula, which essentially shows that the ridgelet transform of a Lizorkin distribution is smooth in the position and scale variables.

We point out that the wavelet transform has shown usefulness to study pointwise scaling properties of distributions \cite{hols,meyer,PV,SV,sohn,v-p-r}. One can then expect that the ridgelet transform of distributions might provide a tool for studying higher dimensional scaling notions, such as those introduced by \L ojasiewicz in \cite{lojasiewicz2}.
\section{Preliminaries}

\subsection{Spaces}\label{spaces}
In this subsection we explain the spaces to be employed in this paper. We use the notation $\mathbb H=\mathbb R\times \mathbb R_{+}$, so that ${\mathbb Y^{n+1}}=\mathbb{S}^{n-1}\times\mathbb{H}=\{(\mathbf{u},b,a):\: \mathbf{u} \in {\mathbb S^{n-1}}, b
\in {{\mathbb R}}, a\in{\mathbb R}_{+} \}$ ,
where we recall that ${\mathbb S^{n-1}}$ stands for the unit sphere of $\mathbb R^{n}$. We always assume that $n\geq2$. We use the constants in the Fourier transform as
$$
\widehat{\phi}(\mathbf{w})
=\int_{{\mathbb R}^n} \phi (\mathbf {x})e^{-i\mathbf{x}\cdot
\mathbf{w}}d{\mathbf x}.
$$

We provide all distribution spaces with the strong dual topologies. Besides ${\mathcal S}({{\mathbb
R}}^n)$ and ${\mathcal S}'({{\mathbb
R}}^n)$, we employ the already mentioned Schwartz space $\mathcal{D}'_{L^{1}}(\mathbb{R}^{n})$, defined in Schwartz' book \cite[p. 200]{scwartz}. The space $\mathcal{D}'_{L^{1}}(\mathbb{R}^{n})$ contains the space of compactly supported distributions $\mathcal{E}'(\mathbb{R}^{n})$ and the space of convolutors $\mathcal{O}_{C}'(\mathbb{R}^{n})$. 
Of crucial importance for our study is the Lizorkin test function space ${{\mathcal S}}_0({{\mathbb
R}}^n)$ of highly time-frequency localized functions over ${{\mathbb
R}}^n$ \cite{hols}. It consists of those elements of ${\mathcal S}({{\mathbb R}}^n)$ having
all moments equal to $0$, namely, $\phi\in {{\mathcal S}}_0({\mathbb{R}}^{n})$ if
$$
\int_{{{\mathbb R}}^n}{\mathbf x^m\phi(\mathbf x)d\mathbf x}=0, \ \ \ \mbox{for all } m\in {{\mathbb N}}_0^n.
$$
It is a closed subspace of  ${\mathcal S}({{\mathbb R}}^n)$.
Let us point out that other authors use a different notion for this space. For instance, Helgason \cite{helgason} denotes $\mathcal S_{0}(\mathbb R^{n})$ by $\mathcal S^{*}(\mathbb R^{n}).$ 
Its dual space $\mathcal{S}'_{0}(\mathbb{R}^{n})$, known as the space of Lizorkin distributions,
 is canonically isomorphic to the quotient of $\mathcal{S}'(\mathbb{R}^{n})$ by the space of polynomials; the quotient projection $\mathcal{S}'(\mathbb{R}^{n})\to\mathcal{S}'_{0}(\mathbb{R}^{n})$ is explicitly given by the restriction of tempered distributions to $\mathcal{S}_{0}(\mathbb{R}^{n})$. This quotient projection is injective on $\mathcal{D}'_{L^{1}}(\mathbb{R}^{n})$; therefore, we can regard $\mathcal{D}_{L^{1}}'(\mathbb{R}^{n})$, $\mathcal{E}'(\mathbb{R}^{n})$, and $\mathcal{O}'_{C}(\mathbb{R}^{n})$ as (dense) subspaces of $\mathcal{S}_{0}'(\mathbb{R}^{n})$.

We denote by $\mathcal{D}(\mathbb{S}^{n-1})$ the space of smooth functions on the sphere. Given a locally convex space $\mathcal{A}$ of smooth test functions on $\mathbb{R}$, we write $\mathcal{A}(\mathbb{S}^{n-1}\times\mathbb{R})$ for the space of functions $\varrho(\mathbf{u},p)$ having the properties of $\mathcal{A}$ in the variable $p\in\mathbb{R}$ and being smooth in $\mathbf{u}\in\mathbb{S}^{n-1}$. 
 

We introduce ${\mathcal S}({\mathbb Y^{n+1}})$ as the space of functions ${\Phi }\in C^{\infty
}({\mathbb Y^{n+1}})$ satisfying the decay conditions
\begin{equation} \label{eqNorms}
\rho_{s,r}^{l,m,k}(\Phi)={\mathop{\sup }_{(\mathbf{u},b,a)\in {\mathbb Y^{n+1}}}
\left(a^{s}+\frac{1}{a^{s}}\right){{{\rm (1+}b^{2}{\rm )}}^{r/2}}
\left|\frac{{\partial
}^l}{\partial a^l}\frac{{\partial }^m}{\partial
b^m}{\triangle_{\mathbf{u}}^{k}}{ \Phi
}\left(\mathbf{u},b,a \right)\right|{\rm \ }<\infty {\rm \ }\ }
\end{equation}
for all $l,m,k,s,r \in {{\mathbb N}}_0$, where ${\triangle_{\mathbf{u}}}$ is the Laplace-Beltrami operator on the unit sphere $\mathbb S^{n-1}$. The topology of this space is defined by means of the seminorms (\ref{eqNorms}). Its dual ${\mathcal S}'({\mathbb Y^{n+1}})$ will be fundamental in our definition of the ridgelet transform of Lizorkin distributions, as it contains the range of this transform (cf. Section \ref{ridgelet distributions}).
We follow the ensuing convention. We fix $a^{-n}{d\mathbf{u}}dbda$ as the \emph{standard measure} on $\mathbb{Y}^{n+1}$. Here $d\mathbf u$ stands for the surface measure on the sphere $\mathbb{S}^{n-1}$. 
Accordingly, our convention for identifying a locally integrable function $F$ on $\mathbb{Y}^{n+1}$ with a distribution on $\mathbb{Y}^{n+1}$ is as follows. If it is of slow growth on $\mathbb{Y}^{n+1}$, namely, it satisfies the bound
\[\left|F\left(\mathbf{u},b,a \right)\right|\leq C(1+\left|b\right|)^{s}\left(a^{s}+\frac{1}{a^s}\right), \ \ \ \left(\mathbf{u},b,a
\right) \in \mathbb {Y}^{n+1},\]
for some $s,C>0$, we shall always identify $F$ with an element of ${\mathcal S}'({\mathbb Y^{n+1}})$ via
\begin{equation}
\label{regular1}\left\langle F,\Phi \right\rangle :={\int^{\infty }_{0 }{\int^{\infty }_{-\infty}\int_{\mathbb S^{n-1}}
{F\left(\mathbf{u},b,a \right)\Phi \left(\mathbf{u},b,a
\right)\frac{d\mathbf{u}dbda}{a^{n}}}}}, \ \  \ \Phi \in \ {\mathcal
S}\left({\mathbb Y^{n+1}}\right).
\end{equation}

A related space is $\mathcal{S}(\mathbb{H})$, the space of highly localized test functions on the upper half-plane \cite{hols}. Its elements are smooth functions $\Psi$ on $\mathbb{H}$ that satisfy
$$\mathop{\sup }_{(b,a)\in {\mathbb H}}\left(a^{s}+\frac{1}{a^s}\right)
(1+b^{2})^{r/2}
\left|\frac{{\partial }^m}{\partial
b^m}\frac{{\partial
}^l}{\partial a^l}{ \Psi
}\left(b,a \right)\right|{\rm \ }<\infty,
$$
for all $l,m,s,r \in {{\mathbb N}}_0$; its topology being defined in the canonical way \cite{hols}.

Observe that the nuclearity of the Schwartz spaces \cite{treves} immediately yields the equalities $\mathcal{S}(\mathbb{Y}^{n+1})=\mathcal{D}(\mathbb{S}^{n-1})\hat{\otimes}\mathcal{S}(\mathbb{H})$, $\mathcal{S}(\mathbb{S}^{n-1}\times \mathbb{R})=\mathcal{D}(\mathbb{S}^{n-1})\hat{\otimes}\mathcal{S}(\mathbb{R})$, and $\mathcal{S}_{0}(\mathbb{S}^{n-1}\times \mathbb{R})=\mathcal{D}(\mathbb{S}^{n-1})\hat{\otimes}\mathcal{S}_{0}(\mathbb{R})$, where $X\hat{\otimes}Y$ is the topological tensor product space obtained as the completion of $X\otimes Y$ in, say, the $\pi$-topology  or the $\varepsilon-$topology \cite{treves}.

\subsection{The ridgelet transform of functions and some distributions}\label{ridgelet transform functions}
Let $\psi \in {\mathcal S}({\mathbb R})$. For $\left(\mathbf{u},b,a \right)\in {\mathbb Y^{n+1}}$, where $\mathbf{u}$ is the orientation parameter, $b$ is the location parameter, and $a$ is the scale parameter, we define the  function ${\psi
}_{\mathbf{u},b,a }:{{\mathbb R}}^n\to {\mathbb C}$, called
\emph{ridgelet}, as
\[{\psi }_{\mathbf{u},b,a }\left(\mathbf{x}\right)=
\frac{1}{a}\psi
\left(\frac{\mathbf x\cdot \mathbf u -b}{a}\right),\  \  \ {\mathbf
x}\in {{\mathbb R}}^n.\]
This function is constant along hyperplanes $\mathbf x\cdot \mathbf u
= \textnormal{const.}$, called ``ridges".  In the orthogonal direction it is a
wavelet, hence the name ridgelet. The function $\psi$ is often referred in the literature \cite{candes1,candes3} as a \emph{neuronal activation function}. The ridgelet transform ${\mathcal R}_{\psi}f$ of an integrable function $ f\in
L^1({{\mathbb R}}^n)$ is defined by
\begin{equation}\label{ridgelet}
{{\mathcal R}}_{\psi}f\left(\mathbf{u},b,a \right)=\int_{\mathbb
R^n}{f(\mathbf{x}){\overline{\psi
}_{\mathbf{u},b,a}}(\mathbf{x})d\mathbf{x}}=\left\langle f(\mathbf x),{\overline{\psi
}_{\mathbf{u},b,a}}(\mathbf{x})\right\rangle_\mathbf{x}.\end{equation}
\noindent  where $\left(\mathbf{u},b,a \right)\in {\mathbb
Y^{n+1}}$.
 
The ridgelet transform can also be canonically defined for distributions $f\in \mathcal{D}'_{L^{1}}(\mathbb{R}^{n})$ via (\ref{ridgelet}), because the test function ${\psi
}_{\mathbf{u},b,a }\in \mathcal{D}_{L^{\infty}}(\mathbb{R}^{n})$ and thus the integral formula can be still interpreted in the sense of Schwartz integrable distributions \cite[p. 203]{scwartz}. In particular, (\ref{ridgelet}) makes sense for $f\in\mathcal{E}'(\mathbb{R}^{n})$ or $f\in\mathcal{O}'_{C}(\mathbb{R}^{n})$. On the other hand,
if one wishes to extend the definition of the ridgelet transform to more general spaces than $\mathcal{D}'_{L^{1}}(\mathbb{R}^{n})$, one must proceed with care. Even in the $L^{2}$ case, (\ref{ridgelet}) is not directly extendable to $f\in L^{2}(\mathbb{R}^{n})$ because the defining integral might fail to converge. A similar difficulty is faced when trying to extend the ridgelet transform to distributions: the function $\psi_{\mathbf{u},b,a}\notin \mathcal{S}(\mathbb{R}^{n})$ and therefore (\ref{ridgelet}) is not well defined for $f\in\mathcal{S}'(\mathbb{R}^{n})$. We shall overcome this difficulty in Section \ref{ridgelet distributions} via a duality approach and define the ridgelet transform of Lizorkin distributions for $\psi\in\mathcal{S}_{0}(\mathbb{R})$.

\subsection{The continuous wavelet transform} Given functions $f$ and $\psi$, the wavelet transform $\mathcal W_{\psi}f(b,a)$ of $f$ is defined by
\begin{equation}\label{eqwavelett}\mathcal W_{\psi}f(b,a)=\int_{\mathbb R}f(x) \frac{1}{a}\overline{\psi}\Big(\frac{x-b}{a}\Big)dx, \ \ \ (b,a)\in\mathbb{H}.
\end{equation}
The expression (\ref{eqwavelett}) is defined, e.g., if $f,\psi\in L^{2}(\mathbb{R})$, $f\in L^{1}(\mathbb{R})$ and $\psi\in L^{\infty}(\mathbb{R})$, or in other circumstances. We will actually work with the wavelet transform of distributions. So if $f\in\mathcal{S}'(\mathbb{R})$ and $\psi \in \mathcal S(\mathbb R)$ (or $f\in\mathcal{S}'_{0}(\mathbb{R})$ and $\psi \in \mathcal S_{0}(\mathbb R)$), one replaces (\ref{eqwavelett}) by
\begin{equation}
\label{wavelett2}\mathcal W_{\psi}f(b,a)= \left\langle  f(x), \frac{1}{a}\overline{\psi}\Big(\frac{x-b}{a}\Big)\right\rangle_{x}, \ \ \ (b,a)\in\mathbb{H}.
\end{equation}
We refer to Holschneider's book \cite{hols} for a distribution wavelet transform theory based on the spaces $\mathcal{S}_{0}(\mathbb{R})$, $\mathcal{S}(\mathbb{H})$, $\mathcal{S}'_{0}(\mathbb{R})$, and $\mathcal{S}'(\mathbb{H})$. For the wavelet transform of vector-valued distributions, we refer to \cite[Sect. 5 and 8]{PV}.

\subsection{The Radon transform}
Let $f$ be a function that is integrable on hyperplanes of $\mathbb R^{n}$. For $\mathbf u\in\mathbb{S}^{n-1}$ and $p\in
\mathbb{R}$, the equation $\mathbf{x}\cdot
\mathbf{u}=p$ specifies a hyperplane of $\mathbb R^n$. Then, the Radon transform of $f$ is defined as
$$ Rf(\mathbf{u},p)=Rf_{\mathbf{u}}(p):=\int_{\mathbf{x}\cdot\mathbf{u}=p}{f(\mathbf{x})d\mathbf{x}}=\int_{\mathbb R^n}{f(\mathbf{x})\delta(p-\mathbf{x}
\cdot \mathbf{u})d\mathbf{x}},$$
where $\delta$ is the Dirac delta. Fubini's theorem ensures that if $f\in L^{1}(\mathbb{R}^{n})$, then   $Rf\in L^{1}(\mathbb{S}^{n-1}\times \mathbb{R})$.
The Fourier transform and the Radon transform are connected by the
so-called \emph{Fourier slice
theorem} \cite{helgason};
according to it, the
Radon transform can be computed as
\begin{equation}\label{Fourierslice}Rf(\mathbf{u},p)=\frac{1}{2\pi}\int_{-\infty}^{\infty}{\widehat{f}(\omega\mathbf{u})e^{ i p\omega } d\omega}, \  \  \  \mathbf u\in\mathbb{S}^{n-1}, \: p\in\mathbb {R},
\end{equation}
for sufficiently regular $f$ (e.g., for $f\in L^{1}(\mathbb{R}^{n})$ such that $\widehat{f}\in L^{1}(\mathbb{R}^{n})$).

The dual Radon transform (or back-projection) $R^{\ast}\varrho$ of the function $\varrho \in L^{\infty}(\mathbb{S}^{n-1}\times \mathbb{R})$ is defined as
\begin{equation*}
R^{\ast}\varrho(\mathbf{x})=\int_{\mathbb S^{n-1}}\varrho(\mathbf{u},\mathbf{x}\cdot \mathbf{u})d\mathbf{u}.
\end{equation*}
The transforms $R$ and $R^{\ast}$ are then formal transposes, i.e., 
\begin{equation}\label{Radonduality}
\left\langle R f,\varrho \right\rangle=\left\langle f,R^{\ast}\varrho \right\rangle.
\end{equation} For instance, for $f\in L^{1}(\mathbb{R}^{n})$ and $\varrho\in L^{\infty}(\mathbb{S}^{n-1}\times \mathbb{R})$,
$$
\int_{\mathbb{R}^{n}} f(\mathbf{x})R^{\ast}\varrho(\mathbf{x})d\mathbf{x}=\int_{-\infty}^{\infty}\int_{\mathbb{S}^{n-1}}Rf(\mathbf{u},p)\varrho(\mathbf{u},p)d\mathbf{u} dp.
$$
More details on the Radon transform can be found in Helgason's book \cite{helgason}.  See also  \cite{gelfand5,hertle2,hertle1,ludwig,ram}. In particularly, Hertle \cite{hertle2} has exploited the duality relation (\ref{Radonduality}) to extend the definition of the Radon transform as a continuous map between  various distribution spaces.  In fact, the dual Radon transform $R^{\ast}:\mathcal{A}(\mathbb{S}^{n-1}\times\mathbb{R})\to \mathcal{A}(\mathbb{R}^{n})$ is continuous for $\mathcal{A}=\mathcal{D}_{L^{1}},\mathcal{E},\mathcal{O}_{C}$ and the Radon transform can then be defined on their duals by transposition as in (\ref{Radonduality}). In Section \ref{section Radon transform} we will enlarge the domain of the Radon transform to the Lizorkin distribution space $\mathcal{S}'_{0}(\mathbb{R}^{n})$. 

\subsection{Relation between the Radon, ridgelet and wavelet transforms}

The ridgelet transform is intimately connected with the
Radon transform. Changing variables in (\ref{ridgelet}) to $\mathbf{x}=p\mathbf{u}+\mathbf{y}$, where $p\in\mathbb{R}$ and $\mathbf{y}$ runs over the hyperplane perpendicular to $\mathbf{u}$, one readily obtains
\begin{equation}\label{rad-rid}{{\mathcal R}}_{\psi}f\left(\mathbf{u},b,a \right)
=\mathcal W_{\psi}(Rf_{\mathbf{u}})(b,a),
\end{equation}
where $\mathcal{W}_{\psi}$ is a one-dimensional wavelet transform. The relation (\ref{rad-rid}) holds if $f\in L^{1}(\mathbb{R}^{n})$. (In fact, we will extend its range of validity in Sections \ref{ridgelet distributions} and \ref{section desingularization}.)
Thus, ridgelet analysis can be seen as a form of wavelet analysis
in the Radon domain, i.e., the ridgelet transform is precisely the
application of a one-dimensional wavelet transform to the slices
of the Radon transform where $\mathbf{u}$ remains fixed and $p$
varies.
Furthermore, 
by the Fourier slice
theorem (\ref{Fourierslice}) and the relation (\ref{rad-rid}), we get the useful formula
\begin{equation}\label{GrindEQ__2_6_}
{\mathcal R}_{\psi}f\left(\mathbf{u},b,a \right)
= \frac{1}{2\pi}\int^{\infty }_{-\infty
}{\widehat{f}\left(\omega\mathbf{u}\right)\overline{\widehat{\psi }}(a\omega
)e^{ib\omega }d\omega }.
\end{equation}

\section{Extended reconstruction formulas and Parseval relations}\label{extended formulas}
In \cite{candes3} (see also \cite[Chap. 2]{candes1}), Cand\`es has established reproducing formulas and Parseval's identities for the ridgelet transform under the assumption that $\psi\in\mathcal{S}(\mathbb{R})$ is an \emph{admissible neuronal activation function}, meaning that it satisfies the constrain
\begin{equation}
\label{admissible}
\int_{-\infty}^{\infty} \frac{|\widehat{\psi}(\omega)|^{2}}{\left|\omega\right|^{n}}d\omega<\infty.
\end{equation}
We shall establish in this section more general reconstruction and Parseval's formulas employing neuronal activation functions which are not necessarily admissible.
The crucial notion involved in our analysis is given in the next definition. As usual, a function is called non-trivial if it is not the zero function.

\begin{de}\label{nondegenerate} Let $\psi\in\mathcal{S}(\mathbb{R})$ be a non-trivial test function. A test function $\eta\in\mathcal{S}(\mathbb{R})$ is said to be a \emph{reconstruction neuronal activation function} for $\psi$ if the constant
\begin{equation}\label{admiss}
K_{\psi, \eta}:=(2\pi)^{n-1}\int^{\infty }_{-\infty}\overline{\widehat{\psi }}(\omega){\widehat{\eta }}(\omega)\frac{d\omega}{|\omega|^{n}}
\end{equation}
is non-zero and finite.\end{de}

\smallskip

 It is then easy to show that \emph{any} $\psi$  admits a reconstruction neuronal activation function $\eta$, as long as $\psi$ is non-trivial, and, in such a case, one may take $\eta\in\mathcal{S}_{0}(\mathbb{R})$, if needed. Our first result states that it is always possible to do ridgelet reconstruction for non-trivial neuronal activation functions.

\begin{pro}[Reconstruction formula]\label{theoremreconstruction} Let $\psi\in\mathcal{S}(\mathbb{R})$ be non-trivial and let $\eta\in\mathcal{S}(\mathbb{R})$ be a reconstruction neuronal activation function for it. If $f\in L^{1}(\mathbb{R}^{n})$ is such that  $\widehat{f}\in L^{1}(\mathbb{R}^{n})$, then the following reconstruction formula holds pointwisely,

\begin{equation} \label{reconstruction1}
f\left(\mathbf{x}\right)=
\frac{1}{K_{\psi,\eta}}
\int_{\mathbb{S}^{n-1}}\int^{\infty
}_{0}\int^{\infty }_{-\infty }
{\mathcal R}_{\psi}f\left(\mathbf{u},b,a
\right){\eta }_{\mathbf{u},b,a}(
\mathbf{x})\frac{dbdad\mathbf{u}}{a^{n}}.
\end{equation}
\end{pro}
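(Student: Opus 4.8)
The plan is to reduce the $n$-dimensional reconstruction formula to a one-dimensional wavelet reconstruction applied to each Radon slice, exploiting the factorization already recorded in the relation (\ref{rad-rid}). First I would verify that the hypotheses $f\in L^{1}(\mathbb{R}^{n})$ and $\widehat{f}\in L^{1}(\mathbb{R}^{n})$ are strong enough to make every integral in sight absolutely convergent: under these assumptions $f$ is continuous (by Fourier inversion) and, via the Fourier slice theorem (\ref{Fourierslice}), each slice $Rf_{\mathbf{u}}$ is a continuous integrable function whose Fourier transform is $\widehat{f}(\,\cdot\,\mathbf{u})$. This regularity is what licenses the pointwise statement and the interchange of the order of integration that the argument will require.

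The key computational step is to pass to the frequency side. Starting from the right-hand side of (\ref{reconstruction1}), I would insert the Fourier representation (\ref{GrindEQ__2_6_}) for $\mathcal{R}_{\psi}f$ and the analogous expression for the synthesizing ridgelet $\eta_{\mathbf{u},b,a}$, namely $\eta_{\mathbf{u},b,a}(\mathbf{x})=(2\pi)^{-1}\int \widehat{\eta}(a\omega)\,e^{i(\mathbf{x}\cdot\mathbf{u}-b)\omega}\,d\omega$. The $b$-integral then produces a factor that localizes the two frequency variables to be equal (a Dirac delta after the $b$-integration, or equivalently one recognizes the inner $b$- and $a$-integrals as a wavelet analysis/synthesis pair), collapsing the triple integral in $b$ and $a$. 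Carrying this out, the $a$-integral of $\overline{\widehat{\psi}}(a\omega)\widehat{\eta}(a\omega)/a^{n}$ over $(0,\infty)$ produces precisely the constant $K_{\psi,\eta}$ defined in (\ref{admiss}) — this is where the substitution $a\omega\mapsto\omega$ and the homogeneity of the measure $a^{-n}\,da$ conspire to yield the admissibility integral. What remains after dividing by $K_{\psi,\eta}$ is
\[
\frac{1}{2\pi}\int_{\mathbb{S}^{n-1}}\int_{-\infty}^{\infty}\widehat{f}(\omega\mathbf{u})\,e^{i\mathbf{x}\cdot\mathbf{u}\,\omega}\,d\omega\,d\mathbf{u},
\]
which, after restricting the $\omega$-integral to $(0,\infty)$ and symmetrizing, is the standard polar-coordinate form of the Fourier inversion formula recovering $f(\mathbf{x})$.

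Alternatively, and perhaps more cleanly, I would route the proof through (\ref{rad-rid}) directly: since $\mathcal{R}_{\psi}f(\mathbf{u},b,a)=\mathcal{W}_{\psi}(Rf_{\mathbf{u}})(b,a)$, the inner double integral in $b$ and $a$ is exactly a one-dimensional wavelet reconstruction of the slice $Rf_{\mathbf{u}}$ from its wavelet transform, with $\psi$ as analyzing and $\eta$ as synthesizing wavelet. Holschneider's one-dimensional theory gives that this double integral reconstructs a constant multiple of $Rf_{\mathbf{u}}(\mathbf{x}\cdot\mathbf{u})$; the remaining integration over $\mathbf{u}\in\mathbb{S}^{n-1}$ is then the dual Radon transform, and one invokes the inversion relating $R^{\ast}R$ to $f$. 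The main obstacle I anticipate is the careful justification of the interchange of integrations and of the collapse of the $a$-integral into $K_{\psi,\eta}$: because $\eta$ need not be admissible a priori and $\psi$ is only assumed non-trivial, one must lean on the hypothesis $\widehat{f}\in L^{1}$ together with the rapid decay of $\widehat{\psi},\widehat{\eta}$ to secure absolute convergence and apply Fubini, and one must confirm that the definition (\ref{admiss}) of $K_{\psi,\eta}$ is precisely the finite constant emerging from the frequency-side computation rather than a divergent quantity. Everything else is a routine change of variables.
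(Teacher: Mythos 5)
Your main route is the paper's own: insert the frequency representation (\ref{GrindEQ__2_6_}) for $\mathcal{R}_{\psi}f$, collapse the $b$-integral (the $b$-Fourier transform of $\eta_{\mathbf{u},b,a}$ gives $\widehat{\eta}(a\omega)e^{i\omega\mathbf{u}\cdot\mathbf{x}}$; your ``two representations plus a delta'' is a cosmetic variant of this), substitute in $a$, and finish with Fourier inversion in polar coordinates, with Fubini licensed by $\widehat{f}\in L^{1}$ and the rapid decay of $\widehat{\psi},\widehat{\eta}$. However, your bookkeeping of the $a$-integral drops the factor that makes the whole argument work. For fixed $\omega\neq0$ the substitution $t=a|\omega|$ gives
\[
\int_{0}^{\infty}\overline{\widehat{\psi}}(a\omega)\widehat{\eta}(a\omega)\frac{da}{a^{n}}
=|\omega|^{n-1}\int_{\operatorname{sign}(\omega)\,t>0}\overline{\widehat{\psi}}(t)\widehat{\eta}(t)\frac{dt}{|t|^{n}},
\]
that is, $|\omega|^{n-1}$ times a \emph{one-sided} admissibility integral whose half-line depends on $\operatorname{sign}(\omega)$ --- not the constant $K_{\psi,\eta}$. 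Two consequences. First, your displayed remainder $\frac{1}{2\pi}\int_{\mathbb{S}^{n-1}}\int_{-\infty}^{\infty}\widehat{f}(\omega\mathbf{u})e^{i\mathbf{x}\cdot\mathbf{u}\,\omega}d\omega\, d\mathbf{u}$ is missing the Jacobian $|\omega|^{n-1}$; by the Fourier slice theorem (\ref{Fourierslice}) what you wrote equals $R^{\ast}Rf(\mathbf{x})$, the \emph{unfiltered} back-projection (a Riesz potential of $f$), which is not $f(\mathbf{x})$. Keeping $|\omega|^{n-1}$, the $\omega>0$ and $\omega<0$ contributions combine after the symmetrization $\mathbf{u}\mapsto-\mathbf{u}$ into $\frac{K_{\psi,\eta}}{(2\pi)^{n}}\int_{0}^{\infty}\int_{\mathbb{S}^{n-1}}e^{i\omega\mathbf{u}\cdot\mathbf{x}}\omega^{n-1}\widehat{f}(\omega\mathbf{u})\,d\mathbf{u}\,d\omega=K_{\psi,\eta}f(\mathbf{x})$, exactly as in the paper. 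Second, the full-line constant $K_{\psi,\eta}$ only emerges from this combination of the two signs; since $\psi$ is merely non-trivial, one of the one-sided integrals could vanish while $K_{\psi,\eta}\neq0$, so the assertion that the $a$-integral alone ``produces precisely $K_{\psi,\eta}$'' is wrong in substance, not merely up to constants.

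The same omission undermines your alternative route through (\ref{rad-rid}): one-dimensional wavelet reconstruction of a slice uses the measure $db\,da/a$, whereas (\ref{reconstruction1}) carries $db\,da/a^{n}$, and the discrepancy $a^{1-n}$ is, on the frequency side, exactly the filter $|\omega|^{n-1}$. If the inner $(b,a)$-integrals literally reconstructed $Rf_{\mathbf{u}}$, the subsequent $\mathbf{u}$-integration would again yield $R^{\ast}Rf$, and $R^{\ast}R$ is \emph{not} a constant multiple of the identity --- Radon inversion requires the filter $(-\Delta)^{(n-1)/2}$, which in the ridgelet setting is encoded in the $|\omega|^{-n}$ (rather than $|\omega|^{-1}$) weight defining $K_{\psi,\eta}$ in (\ref{admiss}). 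So the Radon-slice route is viable only if you use the weighted synthesis $\mathcal{M}_{\eta}J_{1-n}$ (compare the identity $R^{\ast}=c_{\psi,\eta}^{-1}\mathcal{R}^{t}_{\eta}J_{n-1}\mathcal{W}_{\psi}$ in the proof of Corollary \ref{radonc1}) instead of plain wavelet inversion followed by $R^{\ast}$. With the Jacobian restored, the rest of your argument (continuity of $f$, absolute convergence, Fubini) is sound and coincides with the paper's proof.
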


\begin{rem}
Proposition \ref{theoremreconstruction} shows that ridgelet reconstruction is possible for non-oscillatory neuronal activation functions; indeed, for test functions that might not satisfy the admissibility condition (\ref{admissible}) (e.g., the Gaussian $\psi(x)=e^{-x^{2}}$). Nevertheless, if $\psi$ is not oscillatory, then the reconstruction function $\eta$ should compensate this fact by having its first $n+1$ moments equal to $0$.
\end{rem}

\begin{proof}
Indeed, (\ref{GrindEQ__2_6_}) yields
\begin{align*}
&\int_{\mathbb S^{n-1}}{\int^{\infty }_{0}{\int^{\infty
}_{-\infty }{{{\mathcal R}}_{\psi}f\left(\mathbf{u},b,a \right){\eta
}_{\mathbf{u},b,a }({ \mathbf{x}})\frac{dbda{d\mathbf{u} }}{a^{n}}}}}
\\
&
=\frac{1}{2\pi}\int^{\infty }_{-\infty }\int_{\mathbb S^{n-1}}\int^{\infty }_{0}
e^{ i \omega\mathbf{u}\cdot\mathbf{x}}
\overline{\widehat{\psi}}(\omega a)\widehat{\eta}(\omega a)\widehat{f}(\omega\mathbf{u})\frac{dad\mathbf{u}
d\omega}{a^{n}}
\\
&=
\frac{K_{\psi,\eta}}{(2\pi)^{n}}\int^{\infty}_{0}\int_{\mathbb{S}^{n-1}}e^{i \omega\mathbf{u}\cdot\mathbf{x}}
\omega^{n-1}\widehat{f}(\omega\mathbf{u})d\mathbf{u}
d\omega.
\end{align*}
\end{proof}

A similar calculation leads to the ensuing result.

\begin{pro}[Extended Parseval's relation]\label{proposition Parseval} Let $\psi\in\mathcal{S}(\mathbb{R})$ be non-trivial and let $\eta\in\mathcal{S}(\mathbb{R})$ be a reconstruction neuronal activation function for it. Then,
\begin{equation}\label{parseval}
\int_{\mathbb{R}^{n}}f(\mathbf{x})g(\mathbf{x}) d\mathbf{x}=\frac{1}{K_{\psi,\eta}}\int_{\mathbb{S}^{n-1}}\int^{\infty }_{0
}\int^{\infty }_{-\infty} {{\mathcal R}_{\psi}{f}(\mathbf{u},b,a){\mathcal
R}_{\overline{\eta}}{g}(\mathbf{u},b,a)}\frac{dbdad\mathbf{u}}{a^n},
\end{equation}
for any $f,g\in L^{1}(\mathbb{R}^{n})\cap L^{2}(\mathbb{R}^{n})$.
\end{pro}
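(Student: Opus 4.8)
The plan is to mimic the proof of Proposition \ref{theoremreconstruction} almost verbatim, replacing the role of the point-evaluation kernel $\eta_{\mathbf{u},b,a}(\mathbf{x})$ by a second ridgelet transform. The starting point is the Fourier-analytic formula (\ref{GrindEQ__2_6_}) applied to both factors in the integrand on the right-hand side of (\ref{parseval}). Writing
\begin{equation*}
\mathcal{R}_{\psi}f(\mathbf{u},b,a)=\frac{1}{2\pi}\int_{-\infty}^{\infty}\widehat{f}(\omega\mathbf{u})\overline{\widehat{\psi}}(a\omega)e^{ib\omega}d\omega
\end{equation*}
and, using that $\mathcal{R}_{\overline{\eta}}g$ employs $\overline{\eta}$ so that $\widehat{\overline{\eta}}(\xi)=\overline{\widehat{\eta}(-\xi)}$, the analogous expression for $\mathcal{R}_{\overline{\eta}}g(\mathbf{u},b,a)$, I would substitute both into the triple integral. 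The key mechanism, exactly as in the reconstruction proof, is that integration in $b$ over $\mathbb{R}$ produces a factor $2\pi\delta(\omega+\omega')$ (a Fourier inversion in the location variable), which collapses the double frequency integral to a single one and forces the two frequencies to be opposite.

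After performing the $b$-integration and the resulting $\delta$-evaluation, I expect the scale integral to assemble into the admissibility constant. Concretely, the product $\overline{\widehat{\psi}}(a\omega)\,\overline{\widehat{\eta}}(-a\omega')$ evaluated at $\omega'=-\omega$ should become $\overline{\widehat{\psi}}(a\omega)\widehat{\eta}(a\omega)$ (after tracking the conjugation carefully), and then
\begin{equation*}
\int_{0}^{\infty}\overline{\widehat{\psi}}(a\omega)\widehat{\eta}(a\omega)\,\frac{da}{a^{n}}
\end{equation*}
combined with the remaining $\int_{\mathbb{S}^{n-1}}\int_{0}^{\infty}(\cdots)\,|\omega|^{n-1}\,d\omega\,d\mathbf{u}$ and the $(2\pi)$ prefactors should reproduce $K_{\psi,\eta}$ as defined in (\ref{admiss}). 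The surviving expression is then $\frac{1}{(2\pi)^{n}}\int_{\mathbb{S}^{n-1}}\int_{0}^{\infty}\widehat{f}(\omega\mathbf{u})\widehat{g}(-\omega\mathbf{u})\,\omega^{n-1}\,d\omega\,d\mathbf{u}$, which by passing to polar coordinates in $\mathbb{R}^{n}$ is exactly $\frac{1}{(2\pi)^{n}}\int_{\mathbb{R}^{n}}\widehat{f}(\boldsymbol{\xi})\widehat{g}(-\boldsymbol{\xi})\,d\boldsymbol{\xi}$, and the ordinary Parseval/Plancherel identity for the Fourier transform identifies this with $\int_{\mathbb{R}^{n}}f(\mathbf{x})g(\mathbf{x})\,d\mathbf{x}$. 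Dividing by $K_{\psi,\eta}$ gives (\ref{parseval}).

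The main obstacle is not the formal manipulation but justifying the interchange of the order of integration (Fubini--Tonelli) across the $(\mathbf{u},b,a,\omega,\omega')$ integrals, since several of these integrals are only conditionally convergent before the $\delta$-collapse occurs. The hypothesis $f,g\in L^{1}(\mathbb{R}^{n})\cap L^{2}(\mathbb{R}^{n})$ is exactly what I would exploit here: it guarantees $\widehat{f},\widehat{g}\in L^{2}(\mathbb{R}^{n})$ so that the final polar-coordinate integral and the Plancherel step are legitimate, while $L^{1}$-membership gives continuity and boundedness of $\widehat{f},\widehat{g}$, controlling the kernel near $\omega=0$ in conjunction with the finiteness of $K_{\psi,\eta}$. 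A clean way to sidestep delicate convergence issues is to interpret the $b$-integration as a one-dimensional Fourier inversion in the tempered-distribution sense and to regularize the scale integral using the finiteness of (\ref{admiss}); alternatively, one can run the entire computation as a direct consequence of the reconstruction formula (\ref{reconstruction1}) by pairing it against $g$ and recognizing $\int_{\mathbb{R}^{n}}\eta_{\mathbf{u},b,a}(\mathbf{x})g(\mathbf{x})\,d\mathbf{x}=\mathcal{R}_{\overline{\eta}}g(\mathbf{u},b,a)$, which reduces the proposition to an application of Proposition \ref{theoremreconstruction} plus Fubini. I would favor this second route, as it inherits all convergence justifications from the already-established reconstruction formula.
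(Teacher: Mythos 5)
Your first computation is, in essence, the paper's own proof: the paper establishes Proposition \ref{theoremreconstruction} by inserting the Fourier expression (\ref{GrindEQ__2_6_}) and then remarks that ``a similar calculation'' yields Proposition \ref{proposition Parseval}; that similar calculation is exactly what you describe, including the conjugation bookkeeping ($\overline{\widehat{\overline{\eta}}}(a\omega')=\widehat{\eta}(-a\omega')$, so the collapse at $\omega'=-\omega$ produces $\overline{\widehat{\psi}}(a\omega)\widehat{\eta}(a\omega)$), the change of variables $t=a\omega$ reassembling $K_{\psi,\eta}$ (the two signs of $\omega$ together recover the full-line integral in (\ref{admiss})), and the final polar-coordinate/Plancherel step. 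Your reading of the hypothesis is also the right one, and it can be made rigorous without any distributional detour: for fixed $(\mathbf{u},a)$ each factor is, as a function of $b$, the inverse Fourier transform of an $L^{1}\cap L^{2}$ function of $\omega$ (since $\widehat{f},\widehat{g}$ are bounded and $\widehat{\psi},\widehat{\eta}$ are Schwartz), so the $b$-integration is a one-dimensional Parseval identity rather than a $\delta$-manipulation; the subsequent interchange in $(a,\omega,\mathbf{u})$ is then justified by absolute convergence, because $\int_{0}^{\infty}|\widehat{\psi}(a\omega)\widehat{\eta}(a\omega)|\,a^{-n}da$ is bounded by $|\omega|^{n-1}$ times the (finite) absolute integral appearing in (\ref{admiss}), while $\int_{\mathbb{S}^{n-1}}\int_{-\infty}^{\infty}|\widehat{f}(\omega\mathbf{u})\widehat{g}(-\omega\mathbf{u})|\,|\omega|^{n-1}d\omega\,d\mathbf{u}\leq 2\|\widehat{f}\|_{L^{2}}\|\widehat{g}\|_{L^{2}}<\infty$ by Cauchy--Schwarz. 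Note that this proves (\ref{parseval}) as an iterated integral with the $b$-integration performed first; for non-admissible $\psi$ the triple integral need not converge absolutely in the joint measure.

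The route you say you would \emph{favor}, however, contains a genuine gap: Proposition \ref{theoremreconstruction} is stated and proved for $f\in L^{1}(\mathbb{R}^{n})$ with $\widehat{f}\in L^{1}(\mathbb{R}^{n})$, and the latter condition is not implied by $f\in L^{1}\cap L^{2}$, so you cannot simply pair (\ref{reconstruction1}) against $g$ for the class of $f$ in the statement. Moreover, the claim that this reduction ``inherits all convergence justifications from the already-established reconstruction formula'' is mistaken: (\ref{reconstruction1}) is a pointwise identity whose $\mathbb{Y}^{n+1}$-integral is in general only conditionally convergent (the crude bound $\|\mathcal{R}_{\psi}f(\mathbf{u},\cdot,a)\|_{L^{\infty}}\leq(2\pi a)^{-1}\|\widehat{f}\|_{L^{\infty}}\|\widehat{\psi}\|_{L^{1}}$ leaves an $a^{-n-1}$ integrand near $a=0$), so exchanging $\int_{\mathbb{R}^{n}}d\mathbf{x}$ with the triple integral is a fresh Fubini application on a quadruple integral that is not absolutely convergent and needs its own argument. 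One could repair the reduction by proving (\ref{parseval}) first on a dense subclass with $\widehat{f}\in L^{1}$ and passing to the limit, but justifying the limit on the right-hand side amounts to exactly the absolute-convergence estimates of the direct computation --- so your first route is both the paper's intended proof and the efficient one; the second should be dropped or substantially reworked.
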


According to our choice of the standard measure on $\mathbb{Y}^{n+1}$ (cf. Subsection \ref{spaces}), we denote by $L^{2}(\mathbb{Y}^{n+1}):=L^{2}(\mathbb{Y}^{n+1}, a^{-n}{d\mathbf{u} }dbda)$ so that the inner product on this space is
$$
(F,G)_{L^{2}(\mathbb{Y}^{n+1})}:={\int^{\infty }_{0 }{\int^{\infty }_{-\infty}
\int_{\mathbb S^{n-1}}{F\left(\mathbf{u},b,a \right)\overline{G} \left(\mathbf{u},b,a
\right)\frac{{d\mathbf{u}}dbda}{a^{n}}}}}.
$$
As already observed by Cand\`{e}s \cite{candes3}, the transform $\sqrt{K^{-1}_{\psi,\psi}}\mathcal{R}_{\psi}$ is $L^{2}$-norm preserving whenever $\psi$ is an admissible function. In such a case
$||\mathcal{R}_{\psi}||_{L^{2}(\mathbb{Y}^{n+1})}=K_{\psi,\psi}||f||_{L^{2}(\mathbb{R}^{n})}$ on a dense subspace of $L^{2}(\mathbb{R}^{n})$, as follows from  (\ref{parseval}). Consequently, $\mathcal{R}_{\psi}$ extends to a constant multiple of an isometric embedding $L^{2}(\mathbb{R}^{n})\to L^{2}(\mathbb{Y}^{n+1})$.

The reconstruction formula (\ref{reconstruction1}) suggests to define an operator that maps functions on $\mathbb{Y}^{n+1}$ to functions on $\mathbb{R}^{n}$ as superposition of ridgelets. Given $\psi\in\mathcal{S}(\mathbb{R}^{n})$, we introduce the \emph{ridgelet synthesis operator} as  

\begin{equation}
\label{synthesis}
\mathcal{R}_{\psi}^{t} \Phi(\mathbf{x}):= \int_{\mathbb{S}^{n-1}}\int^{\infty
}_{0}\int^{\infty }_{-\infty } \Phi(\mathbf{u},b,a){\psi }_{\mathbf{u},b,a}(
\mathbf{x})\frac{dbdad\mathbf{u}}{a^{n}}, \  \  \  \mathbf{x}\in\mathbb{R}^{n}.
\end{equation}
The integral (\ref{synthesis}) is absolutely convergent, for instance, if $\Phi \in \mathcal{S}(\mathbb{Y}^{n+1})$. In Section \ref{ridgelet test functions} we will show that if $\psi\in\mathcal{S}_{0}(\mathbb{R})$, then $\mathcal{R}^{t}_{\psi}$ maps continuously $\mathcal{S}(\mathbb{Y}^{n+1})\to\mathcal{S}_{0}(\mathbb{R}^{n})$. It will then be shown in Section \ref{ridgelet distributions} that  $\mathcal{R}^{t}_{\psi}$ can be even extended to act on the distribution space $\mathcal{S}'(\mathbb{Y}^{n+1})$.
Observe that the relation (\ref{reconstruction1}) takes the form $(\mathcal{R}^{t}_{\eta}\circ\mathcal{R}_{\psi})f=K_{\psi,\eta} f$.

We remark that $\mathcal{R}^{t}_{\overline{\psi}}$  and $\mathcal{R}_{\psi}$ are actually formal transposes. The proof of the next proposition is left to the reader, it is a simple consequence of Fubini's theorem.

\begin{pro}\label{transpose proposition} Let $\psi\in\mathcal{S}(\mathbb{R})$. If $f\in L^{1}(\mathbb{R}^{n})$ and $\Phi\in \mathcal{S}(\mathbb{Y}^{n+1})$, then
\begin{equation}
\label{transpose1}
\int_{\mathbb{R}^{n}}f(\mathbf{x}) \mathcal{R}^{t}_{\psi} \Phi(\mathbf{x})d\mathbf{x}=\int_{\mathbb{S}^{n-1}}\int^{\infty
}_{0}\int^{\infty }_{-\infty } \mathcal{R}_{\overline{\psi}}f(\mathbf{u},b,a)\Phi(\mathbf{u},b,a) \frac{dbdad\mathbf{u}}{a^n}.
\end{equation}
\end{pro}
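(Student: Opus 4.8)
The plan is to write both sides as a single integral over the product $\mathbb{R}^{n}\times\mathbb{Y}^{n+1}$ and then apply Fubini's theorem to interchange the order of integration. Expanding the left-hand side by the definition (\ref{synthesis}) of the ridgelet synthesis operator, I would write
\[\int_{\mathbb{R}^{n}}f(\mathbf{x}) \mathcal{R}^{t}_{\psi} \Phi(\mathbf{x})d\mathbf{x}=\int_{\mathbb{R}^{n}}f(\mathbf{x})\left(\int_{\mathbb{S}^{n-1}}\int^{\infty}_{0}\int^{\infty }_{-\infty } \Phi(\mathbf{u},b,a){\psi }_{\mathbf{u},b,a}(\mathbf{x})\frac{dbdad\mathbf{u}}{a^{n}}\right)d\mathbf{x}.\]
Once the interchange is legitimate, I would perform the $\mathbf{x}$-integration first. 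Since conjugation commutes with the dilation and translation defining a ridgelet, one has $\overline{(\overline{\psi})_{\mathbf{u},b,a}}=\psi_{\mathbf{u},b,a}$, so the inner integral $\int_{\mathbb{R}^{n}}f(\mathbf{x})\psi_{\mathbf{u},b,a}(\mathbf{x})\,d\mathbf{x}$ equals $\mathcal{R}_{\overline{\psi}}f(\mathbf{u},b,a)$ by (\ref{ridgelet}); this is exactly why the transpose features $\mathcal{R}_{\overline{\psi}}$ rather than $\mathcal{R}_{\psi}$, and it recovers the right-hand side of (\ref{transpose1}).

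The only substantive point is verifying the hypothesis of Fubini's theorem, that is, the absolute integrability of the integrand over $\mathbb{R}^{n}\times\mathbb{Y}^{n+1}$. I would establish this by bounding the inner integral uniformly in $\mathbf{x}$; granting a bound $C$ independent of $\mathbf{x}$, the full absolute integral is at most $C\int_{\mathbb{R}^{n}}|f(\mathbf{x})|\,d\mathbf{x}=C\|f\|_{L^{1}(\mathbb{R}^{n})}<\infty$, since $f\in L^{1}(\mathbb{R}^{n})$. To obtain the uniform bound I would substitute $t=(\mathbf{x}\cdot\mathbf{u}-b)/a$ in the $b$-integration, which transforms $\int_{-\infty}^{\infty}|\Phi|\,|\psi_{\mathbf{u},b,a}(\mathbf{x})|\,db$ into $\int_{-\infty}^{\infty}|\Phi(\mathbf{u},\mathbf{x}\cdot\mathbf{u}-at,a)|\,|\psi(t)|\,dt$, the factors $a^{-1}$ from $\psi_{\mathbf{u},b,a}$ and $a$ from $db$ cancelling and the dependence on $\mathbf{x}$ thereby disappearing from the sphere and scale integrations.

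Invoking the seminorm estimate $|\Phi(\mathbf{u},b,a)|\leq \rho^{0,0,0}_{s,0}(\Phi)\,(a^{s}+a^{-s})^{-1}$ from (\ref{eqNorms}) together with $\psi\in L^{1}(\mathbb{R})$, the remaining bound collapses to a constant multiple of $\int_{0}^{\infty}(a^{s}+a^{-s})^{-1}a^{-n}\,da$ times the surface area of $\mathbb{S}^{n-1}$. This scale integral is the one place where anything genuinely needs to be checked: the integrand behaves like $a^{s-n}$ as $a\to 0^{+}$ and like $a^{-s-n}$ as $a\to\infty$, so it converges provided $s>n-1$ (convergence at infinity being automatic for every $s\geq 0$). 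Choosing, say, $s=n$ yields the uniform bound, Fubini's theorem then applies, and the two iterated integrals coincide, which is precisely (\ref{transpose1}).
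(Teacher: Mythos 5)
Your proof is correct and is exactly the argument the paper has in mind: the paper states the proposition ``is a simple consequence of Fubini's theorem'' and leaves the verification to the reader. Your details fill that in properly --- the substitution $t=(\mathbf{x}\cdot\mathbf{u}-b)/a$ eliminating the $\mathbf{x}$-dependence, the seminorm bound $|\Phi(\mathbf{u},b,a)|\leq \rho^{0,0,0}_{s,0}(\Phi)\left(a^{s}+a^{-s}\right)^{-1}$ with any $s>n-1$ to make the scale integral $\int_{0}^{\infty}\left(a^{s}+a^{-s}\right)^{-1}a^{-n}\,da$ converge, and the identity $\overline{(\overline{\psi})_{\mathbf{u},b,a}}=\psi_{\mathbf{u},b,a}$ explaining the appearance of $\mathcal{R}_{\overline{\psi}}$ --- so nothing is missing.
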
 
Following our convention for regular distributions on $\mathbb{Y}^{n+1}$ (cf. (\ref{regular1})), we may write (\ref{transpose1}) as
\begin{equation*}
\left\langle f,\mathcal{R}_{\bar{\psi}}^{t}\Phi\right\rangle=\left\langle\mathcal{R}_{\psi} f,\Phi\right\rangle.
\end{equation*}
The above dual relation will be the model for our definition of the distributional ridgelet transform.

\section{ Continuity of the ridgelet transform on test function spaces} \label{ridgelet test functions}
The aim of the section is to prove that the
ridgelet mappings 
$${{\mathcal R}}_{\psi}:{{\mathcal S}}_0({{\mathbb R}}^n)\to
{\mathcal S}({\mathbb Y^{n+1}})\  \  \  \  \mbox{ and } \  \  \  \ {\mathcal R}^{t}_{\psi}: {\mathcal
S}({\mathbb Y^{n+1}})\to {\mathcal S}_{0}({{\mathbb R}}^n)$$ are
continuous when $\psi\in\mathcal{S}_{0}(\mathbb{R})$. For non-trivial $\psi$, the ridgelet transform $\mathcal{R}_{\psi}$ is injective and $\mathcal{R}_{\psi}^{t}$ is surjective, due to the reconstruction formula (cf. Proposition \ref{theoremreconstruction}). Recall that we endow ${\mathcal S}({\mathbb Y^{n+1}})$ with the system of seminorms (\ref{eqNorms}). 

Notice that we can extend the definition of the ridgelet transform as a sesquilinear mapping 
$$
\mathcal R: (f, \psi)\mapsto \mathcal R_{\psi}f,
$$
whereas the ridgelet synthesis operator extends to the bilinear form 
$$
\mathcal R^{t}: (\Phi, \psi)\mapsto \mathcal {R}^{t}_{\psi}\Phi.
$$


\begin{theorem}\label{continuity theorem ridgelet}The ridgelet mapping ${{\mathcal R}}:{{\mathcal S}}_0({{\mathbb R}}^n)\times \mathcal S_{0}(\mathbb R)\to {\mathcal S}({\mathbb Y^{n+1}})\ $ is continuous.
\end{theorem}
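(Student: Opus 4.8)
The plan is to work entirely on the Fourier side, using the representation (\ref{GrindEQ__2_6_}),
\[
\mathcal R_\psi f(\mathbf u,b,a)=\frac{1}{2\pi}\int_{-\infty}^{\infty}\widehat f(\omega\mathbf u)\,\overline{\widehat\psi}(a\omega)\,e^{ib\omega}\,d\omega,
\]
which is valid here since $f\in\mathcal S_0(\mathbb R^n)\subset L^1(\mathbb R^n)$ and $\psi\in\mathcal S_0(\mathbb R)\subset\mathcal S(\mathbb R)$. Continuity of the sesquilinear map $\mathcal R$ amounts to bounding each seminorm (\ref{eqNorms}) of $\mathcal R_\psi f$ by a product $C\,p(f)\,q(\psi)$ of a continuous seminorm $p$ on $\mathcal S_0(\mathbb R^n)$ and a continuous seminorm $q$ on $\mathcal S_0(\mathbb R)$; a bound of this product form is precisely what joint continuity of a sesquilinear map on the product of these Fr\'echet spaces requires. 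I would first differentiate under the integral sign (legitimate by the Schwartz decay) to dispose of the differential operators, and then treat the two weights $(1+b^2)^{r/2}$ and $a^s+a^{-s}$ separately, the latter being the genuinely delicate point.

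Disposing of the operators is routine. The factor $\partial_b^m$ pulls out $(i\omega)^m$, and $\partial_a^l$ replaces $\overline{\widehat\psi}(a\omega)$ by $\omega^l\,\overline{\widehat\psi}^{(l)}(a\omega)$. For the Laplace--Beltrami operator I would write $\triangle_{\mathbf u}$ as a sum of squares of the angular momentum fields $L_{ij}=u_i\partial_{u_j}-u_j\partial_{u_i}$; since these are tangential and homogeneous, applying $\triangle_{\mathbf u}^k$ to $\mathbf u\mapsto\widehat f(\omega\mathbf u)$ produces a finite sum $\sum_{|\beta|\le 2k}\omega^{|\beta|}c_\beta(\mathbf u)(\partial^\beta\widehat f)(\omega\mathbf u)$ with $c_\beta$ bounded on $\mathbb S^{n-1}$, i.e. only Euclidean derivatives of $\widehat f$ (still Schwartz and still flat at the origin) together with harmless extra powers of $\omega$. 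The polynomial weight I would bound by $(1+b^2)^{N}$ with $N=\lceil r/2\rceil$ and then convert powers of $b$ into $\omega$-derivatives through $b^je^{ib\omega}=(-i)^j\partial_\omega^j e^{ib\omega}$, integrating by parts. By the Leibniz rule this spreads the $\omega$-derivatives over $\omega^{m+l+|\beta|}$, over $(\partial^\beta\widehat f)(\omega\mathbf u)$ (giving higher, still Schwartz, derivatives), and over $\overline{\widehat\psi}^{(l)}(a\omega)$ (giving at most $a^{2N}$ times higher derivatives of $\widehat\psi$). Thus every seminorm reduces to a finite sum of integrals of the shape $a^q\int_{-\infty}^{\infty}\omega^{p}(\partial^\gamma\widehat f)(\omega\mathbf u)\,\overline{\widehat\psi}^{(\mu)}(a\omega)\,d\omega$ with $0\le q\le 2N$.

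The hard part is the scale weight $a^s+a^{-s}$, and this is exactly where both Lizorkin conditions are needed. The tool is the pair of combined estimates expressing the vanishing of all moments: for $\psi\in\mathcal S_0(\mathbb R)$ and any $M,M'$,
\[
\bigl|\overline{\widehat\psi}^{(\mu)}(\xi)\bigr|\le C\,\frac{|\xi|^{M}}{(1+\xi^2)^{(M+M')/2}},
\]
and likewise $|(\partial^\gamma\widehat f)(\mathbf w)|\le C\,|\mathbf w|^{L}(1+|\mathbf w|^2)^{-(L+L')/2}$ for $f\in\mathcal S_0(\mathbb R^n)$. For the factor $a^{-s}$ (small $a$) I would use the $\psi$-estimate with $\xi=a\omega$: dropping the denominator leaves a factor $a^{M}$, so that $a^{-s}a^{q}\cdot a^{M}\int|\omega|^{p+M}|(\partial^\gamma\widehat f)(\omega\mathbf u)|\,d\omega$ stays bounded once $M\ge s+q$, the remaining $\omega$-integral converging because $\widehat f$ is Schwartz. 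For the factor $a^s$ (large $a$) I would instead substitute $\xi=a\omega$ and exploit the flatness of $\widehat f$ at the origin: since $\omega\mathbf u=\xi\mathbf u/a$ is small, $(\partial^\gamma\widehat f)(\xi\mathbf u/a)=O((|\xi|/a)^{L})$ produces a factor $a^{-L}$, giving arbitrarily large negative powers of $a$ that defeat $a^{s}a^{q}$ once $L$ is chosen large, the $\omega$-integral now converging because $\widehat\psi^{(\mu)}$ is Schwartz. Choosing the exponents $M,M',L,L'$ large relative to $s,r,l,m,k$ closes both regimes simultaneously, and all the resulting constants are finite multiples of Schwartz seminorms of $\widehat f$ and of $\widehat\psi$. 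Finally, since the Fourier transform is a topological isomorphism of $\mathcal S_0$ onto itself, these translate into continuous seminorms of $f$ on $\mathcal S_0(\mathbb R^n)$ and of $\psi$ on $\mathcal S_0(\mathbb R)$, yielding the desired product estimate.
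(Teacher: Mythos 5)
Your proposal is correct and follows essentially the same route as the paper's proof: both argue from formula (\ref{GrindEQ__2_6_}), dispose of the $\partial_a^l\partial_b^m\triangle_{\mathbf u}^k$ derivatives by Leibniz-type reductions with polynomials bounded on $\mathbb S^{n-1}$ (the paper performs this reduction on the spatial side, you on the Fourier side --- a cosmetic difference), trade the weight $(1+b^2)^{r/2}$ for integrations by parts in $\omega$, and, at the crux, control $a^{-s}$ through the flatness of $\widehat\psi$ at the origin and $a^{s}$ through the flatness of $\widehat\phi$ at the origin, which is precisely the content of the paper's Taylor-expansion steps 4 and 5. One harmless slip: the Fourier transform maps $\mathcal S_0(\mathbb R^n)$ isomorphically onto the closed subspace of Schwartz functions vanishing to infinite order at the origin, not onto $\mathcal S_0(\mathbb R^n)$ itself, but since your argument only uses the continuity of $f\mapsto\widehat f$ to convert seminorms of $\widehat f$ into continuous seminorms of $f$, the conclusion is unaffected.
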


\begin{proof} For the seminorms on $\mathcal{S}_{0}(\mathbb R^{n})$, we make the choice

\begin{equation}\label{eqNormS}\rho_{\nu}(\phi)=\sup_{\mathbf{x}\in \mathbb R^{n}, |m|\leq \nu}(1+|\mathbf{x}|)^{\nu}
\left|
\phi^{(m)}(\mathbf{x})\right|, \ \ \ \nu\in\mathbb{N}_{0}.
\end{equation}
We will show that, given $s,r,m,l,k \in {{\mathbb N}}_0$, there exist $\nu,\tau \in \mathbb N$ and $C>0$ such that

\begin{equation}\label{GrindEQ__3_1_}
\rho_{s,r}^{l,m,k}(\mathcal R_{\psi}\phi)\leq C \rho_{\nu}(\phi)\rho_{\tau}(\psi), \ \ \  \phi\in\mathcal{S}_{0}(\mathbb{R}^{n}),\: \psi\in\mathcal{S}_{0}(\mathbb{R}).
\end{equation}
We may assume that $r$ is even and $s\geq 1$. We divide the proof into six steps.

1. Using the definition of the ridgelet transform and the Leibniz formula, we have
\begin{align*}
&\left|\frac{\partial^l}{\partial a^l}\frac{\partial^m}{\partial b^m}{{\mathcal
R}}_{\psi}\phi\left(\mathbf{u},b,a \right)\right|
\\
&
=
\sum^l_{j=0}\frac{C_{m,l,j}}{a^{m+l-j}}\underset{d\leq 2j}{\sum_{i,q\leq j}}a^{-d-1}B_{m,l,j}^{i,q,d}
\left|\int_{\mathbb R^{n}} \phi\left({\mathbf x}\right)
\psi^{(m+i)} \left(\frac{\mathbf{x}\cdot\mathbf{u} -b}{a}\right)
(\mathbf{x}\cdot\mathbf{u}-b)^qd\mathbf{x} \right|
\\
&
\leq C \left(a^{m+2l}+\frac{1}{a^{m+2l}}\right)({1+b^{2}})^{l/2}\sum_{|\alpha|,i\leq l}\left|\frac{1}{a}\int_{\mathbb R^{n}} \mathbf{x^{\alpha}} \phi\left({\mathbf x}\right)
\psi^{(m+i)}\left(\frac{\mathbf{x}\cdot\mathbf{u} -b}{a}\right)
d\mathbf{x} \right|.
\end{align*}
Setting $\phi_{\alpha}(\mathbf{x})=\mathbf{x}^{\alpha}\phi(\mathbf{x})$, this yields
$$
\rho_{s,r}^{l,m,k}(\mathcal R_{\psi}\phi)\leq C 
\underset{|\alpha|\leq l}{\sum_{j\leq m+l}}\rho_{s+m+2l,r+l}^{0,0,k}(\mathcal{R}_{\psi^{(j)}}(\phi_{\alpha})). 
$$
So we can assume that $m=l=0$ because multiplication by $\mathbf{x}^{\alpha}$ and differentiation are continuous operators on  $\mathcal S_0.$

2. We now show that we may assume that $k=0$. Notice that

\begin{align*}\triangle_{\mathbf{u}}^{k}{{\mathcal
R}}_{\psi}\phi\left(\mathbf{u},b,a\right)&=\triangle_{\mathbf{u}}^{k}\int_{\mathbb R^{n}} \phi\left({\mathbf x}\right)a^{-1}\psi \left(\frac{\mathbf{x}\cdot\mathbf{u} -b}{a}\right)d\mathbf{x}
\\
&
={\sum_{|\alpha|,j,d\leq 2k}}a^{-d}P_{\alpha,j,d}(\mathbf{u})\frac{1}{a}\int_{\mathbb R^{n}} \mathbf{x^{\alpha}} \phi\left({\mathbf x}\right)
\psi^{(j)}\left(\frac{\mathbf{x}\cdot\mathbf{u} -b}{a}\right)
d\mathbf{x},
\end{align*}
where the $P_{\alpha,j,d}(\mathbf{u})$ are certain polynomials. The $P_{\alpha,j,d}$ are bounded, thus
$$
\left|\triangle_{\mathbf{u}}^{k}{{\mathcal
R}}_{\psi}\phi\left(\mathbf{u},b,a\right)\right|\leq C \left(a^{2k}+\frac{1}{a^{2k}}\right)\sum_{|\alpha|,j\leq 2k}\left|\frac{1}{a}\int_{\mathbb R^{n}} \mathbf{x^{\alpha}} \phi\left({\mathbf x}\right)
\psi^{(j)}\left(\frac{\mathbf{x}\cdot\mathbf{u} -b}{a}\right)
d\mathbf{x}\right|.
$$
This gives (with $\phi_{\alpha}$ as before)
$$
\rho_{s,r}^{0,0,k}(\mathcal R_{\psi}\phi)\leq C 
\sum_{|\alpha|,j\leq 2k}\rho_{2k+s,r}^{0,0,0}(\mathcal R_{\psi^{(j)}}
(\phi_{\alpha})). 
$$
Reasoning as above,  we can assume that $k=0$.

3. Observe that, by (\ref{GrindEQ__2_6_}),
\begin{align*}
\left(1+b^{2}\right)^{r/2}{{\mathcal
R}}_{\psi}\phi\left(\mathbf{u},b,a \right)&=\frac{1}{2\pi}\int_{-\infty}^{\infty} \widehat{\phi}(\omega\mathbf{u})\overline{\widehat{\psi}}(a\omega)\left(1-\frac{\partial^2}{\partial \omega^{2}}\right)^{r/2}e^{ib\omega}d\omega
\\
&
=\frac{1}{2\pi}\int_{-\infty}^{\infty}e^{ib\omega}\left(1-\frac{\partial^2}{\partial \omega^{2}}\right)^{r/2}(\widehat{\phi}(\omega\mathbf{u})\overline{\widehat{\psi}}(a\omega))d\omega
\\
&
=
\sum_{|\alpha|,j\leq r}a^{j}Q_{\alpha,j}(\mathbf{u})\int_{-\infty}^{\infty}e^{ib\omega} \widehat{\phi}^{(\alpha)}(\omega\mathbf{u})\overline{\widehat{\psi}^{(j)}}(a\omega) d\omega,
\end{align*}
for some polynomials $Q_{\alpha,j}$. Taking (\ref{GrindEQ__2_6_}) into account, and writing $\psi_{j}(x)=x^{j}\psi(x)$ and again $\phi_{\alpha}(\mathbf{x})=\mathbf{x}^{\alpha}\phi(\mathbf{x})$, we conclude that
$$
\rho^{0,0,0}_{s,r}(\mathcal{R}_{\psi}\phi)\leq C \sum_{|\alpha|,j\leq r}\rho_{s+r,0}^{0,0,0}(\mathcal R_{\psi_{j}}
(\phi_{\alpha})). 
$$
Consequently, we can assume $r=0$.

4.  We consider the part involving multiplication by $a^s$ in $\rho^{0,0,0}_{s,0}$.
Using the Taylor expansion of $\widehat{\phi}$, we obtain
\begin{align*}
a^s\left|{\mathcal R}_{\psi}\phi\left({\mathbf{u}},b,a \right)\right|&=\frac{a^{s}}{2\pi}
\left|\int_{-\infty}^{\infty} \widehat{\phi}(\omega{\mathbf{u}})\overline{\widehat{\psi}}(a\omega)e^{ib\omega}d\omega \right|
\\
&
\leq \sum_{|\alpha|= s-1} \frac{a^{s}}{2\pi}\left|\int_{-\infty}^{\infty}
\frac{(\omega{\mathbf{u}})^{\alpha}}{\alpha!}\widehat{\phi}^{(\alpha)}
(\omega_0{\mathbf{u}})
\overline{\widehat{\psi}}(a\omega)e^{ib\omega}
d\omega\right|
\\
&
\leq
\left(\sum_{|\alpha|= s-1}\frac{1}{2\pi \alpha!} \int_{\mathbb{R}^{n}} |\mathbf{x}^{\alpha}\phi(\mathbf{x})|d\mathbf{x}\right) \int_{-\infty}^{\infty} \left|\omega^{s-1}\widehat{\psi}(\omega)\right|
d\omega
\\
&
\leq C \rho_{s+n}(\phi) \rho_{s+1}(\psi).
\end{align*}

5. For the multiplication by $a^{-s},$ we develop $\widehat{\psi}$ into its Taylor expansion of order $s$. Then,
\begin{align*}a^{-s}\left|{{\mathcal R}}_{\psi}\phi\left({\mathbf{u}},b,a \right)\right|&=
\frac{1}{2\pi a^{s}}\left|\int_{-\infty}^{\infty} \widehat{\phi}(\omega{\mathbf{u}})\overline{\widehat{\psi}}(a\omega)e^{ib\omega}d\omega\right|
\\
&
\leq \frac{1}{2\pi s!}\int_{-\infty}^{\infty}
 |\omega^s\widehat{\phi}(\omega{\mathbf{u}})\overline{\widehat{\psi}^{(s)}}
(a\omega_0)|
d\omega.
\end{align*}
It is easy to see that last integral is less than $C \rho_{s+n+1}(\phi)\rho_{s+2}(\psi).$ Combining this fact with the bound from step 4, we obtain
$$
\rho^{0,0,0}_{s,0}(\mathcal{R}_{\psi}\phi)\leq C \rho_{s+n+1}(\phi)\rho_{s+2}(\psi). 
$$ 

6. Summing up all the estimates, we find that (\ref{GrindEQ__3_1_}) holds with $\nu=s+2r+4l+4k+m+n+1$ and $\tau=s+2r+4l+4k+2m+2$. This completes the proof.
\end{proof}

We now study the ridgelet synthesis operator.

\begin{theorem} \label{continuity theorem inverse} The bilinear mapping ${{{\mathcal R}}^{t}}:
{\mathcal S}({\mathbb Y^{n+1}})\times \mathcal{S}_{0}(\mathbb{R})\to {{\mathcal S}}_{0}({{\mathbb R}}^n)$ is continuous. \end{theorem}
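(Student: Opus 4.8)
The plan is to factor $\mathcal R^{t}_{\psi}$ as the composition of a one–dimensional wavelet synthesis operator acting in the half–plane variables $(b,a)$ and the dual Radon transform (back–projection) acting in the variables $(\mathbf u,p)$, and then to feed each factor into an already available continuity result, using the tensor product descriptions $\mathcal S(\mathbb Y^{n+1})=\mathcal D(\mathbb S^{n-1})\hat\otimes\mathcal S(\mathbb H)$ and $\mathcal S_{0}(\mathbb S^{n-1}\times\mathbb R)=\mathcal D(\mathbb S^{n-1})\hat\otimes\mathcal S_{0}(\mathbb R)$ to transfer continuity from the $p$–factor to the whole space. Concretely, a direct application of Fubini's theorem in (\ref{synthesis}) (legitimate since that integral is absolutely convergent) gives $\mathcal R^{t}_{\psi}\Phi=R^{\ast}\Psi$, where
\[
\Psi(\mathbf u,p)=\int_{0}^{\infty}\int_{-\infty}^{\infty}\Phi(\mathbf u,b,a)\,\frac1a\psi\!\left(\frac{p-b}{a}\right)\frac{db\,da}{a^{n}},\qquad (\mathbf u,p)\in\mathbb S^{n-1}\times\mathbb R .
\]
Thus the proof reduces to two claims: (i) $\Phi\mapsto\Psi$ maps $\mathcal S(\mathbb Y^{n+1})$ continuously into $\mathcal S_{0}(\mathbb S^{n-1}\times\mathbb R)$, bilinearly in $(\Phi,\psi)$; and (ii) $R^{\ast}$ maps $\mathcal S_{0}(\mathbb S^{n-1}\times\mathbb R)$ continuously into $\mathcal S_{0}(\mathbb R^{n})$.

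For (i) I first absorb the extra power of the scale. Writing $\tilde\Phi(\mathbf u,b,a)=a^{-(n-1)}\Phi(\mathbf u,b,a)$, the inner integral takes the form $\Psi(\mathbf u,\cdot)=\mathcal M_{\psi}\big(\tilde\Phi(\mathbf u,\cdot,\cdot)\big)$ of a wavelet synthesis operator of the type studied in \cite{hols} (the residual power of $a$ being adjusted to match Holschneider's normalization). Multiplication by $a^{-(n-1)}$ is continuous on $\mathcal S(\mathbb H)$, because after expanding $\partial_{a}^{l}$ by Leibniz each resulting power of $a$ and of $1/a$ is dominated by a weight $a^{s}+a^{-s}$; hence $\Phi\mapsto\tilde\Phi$ is continuous on $\mathcal S(\mathbb Y^{n+1})$. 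By Holschneider's wavelet theory, $\mathcal M_{\psi}:\mathcal S(\mathbb H)\to\mathcal S_{0}(\mathbb R)$ is continuous whenever $\psi\in\mathcal S_{0}(\mathbb R)$, the governing estimate being a product of a seminorm of the argument and a seminorm of $\psi$ (so the dependence on $\psi$ is bilinear, exactly as in Theorem \ref{continuity theorem ridgelet}). Tensoring with the identity on $\mathcal D(\mathbb S^{n-1})$ and invoking the functoriality of $\hat\otimes$ upgrades this to continuity of $\mathrm{id}\,\hat\otimes\,\mathcal M_{\psi}:\mathcal D(\mathbb S^{n-1})\hat\otimes\mathcal S(\mathbb H)\to\mathcal D(\mathbb S^{n-1})\hat\otimes\mathcal S_{0}(\mathbb R)$, which is precisely (i).

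Step (ii) is the main obstacle, since the back–projection $R^{\ast}$ does \emph{not} improve decay for an arbitrary rapidly decreasing $\Psi$ (a crude bound gives only $O(|\mathbf x|^{-1})$ decay from the equatorial band); the whole point is that the flatness encoded in $\Psi(\mathbf u,\cdot)\in\mathcal S_{0}(\mathbb R)$ is exactly what saves the argument. This may simply be quoted from Helgason's range theorems for the dual Radon transform on the Lizorkin spaces $\mathcal S_{0}$ \cite{helgason}. If one prefers a self–contained argument, the cleanest route is the Fourier slice theorem: passing to the Fourier transform in $p$ and substituting $\mathbf w=\omega\mathbf u$, one finds $\widehat{R^{\ast}\Psi}(\mathbf w)$ equal to $|\mathbf w|^{1-n}$ times $\widehat\Psi(\mathbf w/|\mathbf w|,|\mathbf w|)+\widehat\Psi(-\mathbf w/|\mathbf w|,-|\mathbf w|)$. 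Because $\Psi(\mathbf u,\cdot)\in\mathcal S_{0}(\mathbb R)$, its partial Fourier transform $\widehat\Psi(\mathbf u,\omega)$ vanishes to infinite order at $\omega=0$ uniformly in $\mathbf u$; factoring $\widehat\Psi(\mathbf u,\omega)=\omega^{\,n-1}g(\mathbf u,\omega)$ with $g$ smooth and still flat at $\omega=0$ annihilates the singular factor $|\mathbf w|^{1-n}$ together with the directional non–smoothness of $\mathbf w/|\mathbf w|$, showing that $\widehat{R^{\ast}\Psi}$ is smooth, rapidly decreasing, and itself flat at the origin, i.e.\ $R^{\ast}\Psi\in\mathcal S_{0}(\mathbb R^{n})$, with continuous (and $\psi$–independent) dependence on $\Psi$.

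Composing (i) and (ii) yields the desired bilinear estimate $\rho_{\nu}(\mathcal R^{t}_{\psi}\Phi)\leq C\,q(\Phi)\,\rho_{\tau}(\psi)$; since all the spaces involved are Fréchet, separate continuity in $\Phi$ and in $\psi$ already forces joint continuity, which completes the proof. Finally, the surjectivity of $\mathcal R^{t}_{\psi}$ (and injectivity of $\mathcal R_{\psi}$) for non–trivial $\psi$ follows from the identity $\mathcal R^{t}_{\eta}\circ\mathcal R_{\psi}=K_{\psi,\eta}\,\mathrm{id}$ recorded after Proposition \ref{theoremreconstruction}.
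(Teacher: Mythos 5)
Your proof is essentially correct, but it takes a genuinely different route from the paper's. The paper proves the theorem directly: it first verifies the range claim via the polar Fourier formula (\ref{rid-Fourier-polar}), then constructs an ad hoc base of seminorms $\dot{\rho}_{N,q,k}$ on $\mathcal{S}_{0}(\mathbb{R}^{n})$ (justified by the isomorphism $\mathcal{S}_{0}(\mathbb{R}^{n})\cong\mathcal{S}_{\ast}(\mathbb{R}^{n})$, passage to polar coordinates onto the closed subspace of even functions, and the open mapping theorem), and finally establishes the explicit bilinear estimate $\dot{\rho}_{N,q,k}(\mathcal{R}^{t}_{\psi}\Phi)\leq C\hat{\rho}_{n-1+q}(\psi)\sum_{m,s\leq\nu}\hat{\rho}^{0,m,k}_{s,N}(\Phi)$ by Leibniz and Taylor expansion. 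You instead factor $\mathcal{R}^{t}_{\psi}=R^{\ast}\circ\mathcal{M}_{\psi}\circ J_{1-n}$ and transfer continuity factor by factor, using Holschneider's one-dimensional synthesis result tensored with $\operatorname{id}_{\mathcal{D}(\mathbb{S}^{n-1})}$; this is exactly the identity the paper itself exploits only \emph{later} (in the proofs of Corollary \ref{radonc1} and Theorem \ref{theorem 1 ridgelet}), so your argument is a legitimate inversion of the paper's logical order. Two caveats. First, beware of circularity: you cannot invoke the paper's Corollary \ref{radonc1} for the continuity of $R^{\ast}$ on $\mathcal{S}_{0}(\mathbb{S}^{n-1}\times\mathbb{R})$, since that corollary is deduced \emph{from} the present theorem; moreover Helgason's book gives the mapping and range statements for $R^{\ast}$ on the Lizorkin spaces but not, in so many words, its topological continuity, so quoting him outright leaves a small gap. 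You anticipated this and supplied the self-contained Fourier-slice argument, which does close the gap (continuity can also be had cheaply by the closed graph theorem between Fr\'{e}chet spaces once the range statement is known); note, though, that at that point your computation — flatness of $\widehat{\Psi}(\mathbf{u},\omega)$ at $\omega=0$ absorbing the singular factor $|\mathbf{w}|^{1-n}$ and the non-smoothness of $\mathbf{w}/|\mathbf{w}|$ — is precisely the analytic core of the paper's own proof around (\ref{rid-Fourier-polar}), so the hard analysis is not avoided, only repackaged. Second, Holschneider states continuity of $\mathcal{M}_{\psi}$ for \emph{fixed} $\psi$; your appeal to the Fr\'{e}chet principle that separately continuous bilinear maps are jointly continuous is the right fix, but then you should argue separate continuity in $\psi$ (e.g., by inspecting the estimates or a closed graph argument) rather than asserting that the quoted estimate is already bilinear. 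What each approach buys: the paper's direct proof yields quantitative seminorm bounds with explicit loss indices, useful elsewhere in the paper; yours is more modular and conceptual, reducing the theorem to one-dimensional wavelet theory plus the dual Radon transform on Lizorkin spaces, and it makes transparent why the Lizorkin flatness (rather than mere Schwartz decay) is what makes the back-projection work — the same point the paper makes against Roopkumar's claims in Remark \ref{remark Roopkumar}.
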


\begin{proof} Let us first verify that the ridgelet synthesis operator has the claimed range, that is, we show that if  $\psi\in\mathcal{S}_{0}(\mathbb{R})$ and $\Phi\in\mathcal{S}(\mathbb{Y}^{n+1})$, then  $\phi(\mathbf{x}):=\mathcal{R}^{t}_{\psi}\Phi\in \mathcal{S}_{0}(\mathbb{R}^{n})$. In other words, we have to prove that 
\begin{equation}\label{eq4.19}
\lim_{\mathbf{w}\to0}\frac{\widehat{\phi}(\mathbf{w})}{|\mathbf{w}|^{k}}=0, \  \  \  \forall k\in\mathbb{N}_{0}.
\end{equation}
Observe that
$$
\phi(\mathbf{x})= \frac{1}{2\pi}\int_{\mathbb{S}^{n-1}}\int_{-\infty}^{\infty} \omega^{n-1}e^{i \omega \mathbf{u}\cdot\mathbf{x}}\left(\int_{0}^{\infty}\widehat{\Phi}(\mathbf{u},\omega,a)\frac{\widehat{\psi}(\omega a)}{(\omega a)^{n-1}}\frac{da}{a}\right)d \omega d\mathbf{u};
$$
hence, by Fourier inversion in polar coordinates,
\begin{equation}
\label{rid-Fourier-polar}
\widehat{\phi}(\omega\mathbf{u})=(2\pi)^{n-1}\int_{0}^{\infty}\left(\widehat{\Phi}(\mathbf{u},\omega,a)\frac{\widehat{\psi}(\omega a)}{(\omega a)^{n-1}}+\widehat{\Phi}(\mathbf{u},-\omega,a)\frac{\widehat{\psi}(-\omega a)}{(-\omega a)^{n-1}}\right )\frac{da}{a},
\end{equation}
$\omega\in\mathbb{R}_{+},\: \mathbf{u}\in\mathbb{S}^{n-1}.$ (Here $\widehat{\Phi}$ stands for the Fourier transform of $\Phi(\mathbf{u},b,a)$ with respect to the variable $b$.) Since $\Phi$ belongs to $\mathcal{S}(\mathbb{Y}^{n+1})$,  we have that for any $k\in\mathbb{N}$ we can find a constant $C_{k}>0$ such that $|\widehat{\Phi}(\mathbf{u},\omega,a)|\leq C_{k}a^{-k-1}$, uniformly for $\omega\in\mathbb{R}$ and $\mathbf{u}\in\mathbb{S}^{n-1}$. Thus, 
$$
\left|\widehat{\phi}(\omega\mathbf{u})\right|\leq C_{k} \int_{-\infty}^{\infty}\frac{|\widehat{\psi}(\omega a)|}{|\omega a |^{n-1}}\frac{da}{|a|^{k+2}}= C_{k} \omega^{k+1}\int_{-\infty}^{\infty}\left|\frac{\widehat{\psi}( a)}{a ^{n+k+1}}\right|da, \  \  \  \omega\in\mathbb{R},\: \mathbf{u}\in\mathbb{S}^{n-1},
$$
whence (\ref{eq4.19}) follows.

We now prove the continuity of the bilinear ridgelet synthesis mapping. Since the Fourier transforms $\psi\mapsto\widehat{\psi}$ and $\Phi\mapsto\widehat{\Phi}$ are continuous automorphisms on the $\mathcal{S}$ spaces, the families (cf. (\ref{eqNormS}) and (\ref{eqNorms}))
$$
\hat{\rho}_{\nu}(\psi)= \rho_{\nu}(\widehat{\psi}), \  \  \  \psi\in\mathcal{S}_{0}(\mathbb{R}), \  \  \  \nu=0,1,\dots, 
$$
and
$$
\hat{\rho}^{l,m,k}_{s,r}(\Phi)= \rho^{l,m,k}_{s,r}(\widehat{\Phi}), \  \  \ \Phi\in\mathcal{S}(\mathbb{Y}^{n+1}), \  \  \ \ l,m,k,s,r\in\mathbb{N}_{0},
$$
are bases of seminorms for the topologies of $\mathcal{S}_{0}(\mathbb{R})$ and $\mathcal{S}(\mathbb{Y}^{n+1})$, respectively.  We shall need a different family of seminorms on $\mathcal{S}_{0}(\mathbb{R}^{n})$. Observe first that the Fourier transform provides a Fr\'{e}chet space isomorphism from  $\mathcal{S}_{0}(\mathbb{R}^{n})$ onto $\mathcal{S}_{\ast}(\mathbb{R}^{n})$, the closed subspace of $\mathcal{S}(\mathbb{R}^{n})$ consisting of all those test functions that vanish at the origin together with all their partial derivatives. On the other hand, polar coordinates $\varphi(\omega \mathbf{u})$ provide a continuous mapping $\mathcal{S}_{\ast}(\mathbb{R}^{n})\to\mathcal{S}_{\ast}(\mathbb{S}^{n-1}\times\mathbb{R})$; the range of this mapping is closed (it consists of even test functions, i.e., $\varrho(-\mathbf{u},-\omega)=\varrho(\mathbf{u},\omega)$ \cite{drozhzhinov-z4,helgason}), and therefore the open mapping theorem implies that it is an isomorphism into its image.
Summarizing, the seminorms $\dot{\rho}_{N,q,k}$, given by
$$
\dot{\rho}_{N,q,k}(\phi):=\sup_{(\mathbf{u},\omega)\in\mathbb{S}^{n-1}\times\mathbb{R}} \left|\omega^{N} \frac{\partial^{q}}{\partial \omega^{q}}\Delta^{k}_{\mathbf{u}}\widehat{\phi}(\omega \mathbf{u})\right|, \   \   \   N,q,k\in\mathbb{N}_{0},
$$
are a base of continuous seminorms for the topology of $\mathcal{S}_{0}(\mathbb{R}^{n})$. We show that given $N,q,k\in\mathbb{N}_{0}$ there are $C>0$ and $\nu\in\mathbb{N}$ such that  
$$
\dot{\rho}_{N,q,k}\left(\mathcal{R}^{t}_{\psi}\Phi\right)\leq C \hat{\rho}_{n-1+q}(\psi)\sum_{m,s\leq \nu}\hat{\rho}^{0,m,k}_{s,N}(\Phi). 
$$
Now, setting again $\phi(\mathbf{x}):=\mathcal{R}^{t}_{\psi}\Phi\in \mathcal{S}_{0}(\mathbb{R}^{n})$, using the expression (\ref{rid-Fourier-polar}), the Leibniz formula, and the Taylor expansion for $\psi$, we get 
\begin{align*}
\left| \omega^{N}\frac{\partial^{q}}{\partial \omega^{q}}\Delta^{k}_{\mathbf{u}}\widehat{\phi}(\omega \mathbf{u})\right|
&
\leq  C \sum_{j=0}^{q}\sum_{d=0}^{j} \int_{-\infty}^{\infty}\left| a^{-j-1}\omega^{N}\frac{\partial^{q-j}}{\partial\omega^{q-j}}\Delta^{k}_{\mathbf{u}} \widehat{\Phi}(\mathbf{u},\omega,a)\frac{\widehat{\psi}^{(j-d)}(\omega a)}{(\omega a)^{n-1+d}}\right|da
\\
&
=C \sum_{j=0}^{q}\sum_{d=0}^{j} \int_{-\infty}^{\infty}\left| a^{-j-1}\omega^{N}\frac{\partial^{q-j}}{\partial\omega^{q-j}}\Delta^{k}_{\mathbf{u}}\widehat{\Phi}(\mathbf{u},\omega,a)\widehat{\psi}^{(j+n-1)}(\omega_{0} a)\right|da
\\
& 
\leq 
C\hat{\rho}_{n-1+q}(\psi) \sum_{j=0}^{q}(j+1)\hat{\rho}^{0,q-j,k}_{j+3,N}(\Phi)\int_{-\infty}^{\infty}\frac{a^{2}da}{a^{4}+1},
\end{align*}
as claimed.
\end{proof}

For future use, it is convenient to introduce wavelet analysis on $\mathcal{S}(\mathbb{S}^{n-1}\times \mathbb{R})$. Given $\psi\in\mathcal{S}(\mathbb{R})$, we let $\mathcal{W}_{\psi}$ act on the real variable $p$ of functions $g(\mathbf{u},p)$ (or distributions), that is,
\begin{equation}\label{waveletS}
\mathcal{W}_{\psi} g(\mathbf{u},b,a):= \int_{-\infty}^{\infty} \frac{1}{a}\overline{\psi}\left(\frac{p-b}{a}\right) g(\mathbf{u},p)dp= \left\langle  g(\mathbf{u},p), \frac{1}{a}\overline{\psi}\Big(\frac{p-b}{a}\Big)\right\rangle_{p},
\end{equation}
$(\mathbf{u},b,a)\in\mathbb{Y}^{n+1}$. Similarly, we define the wavelet synthesis operator on $\mathcal{S}(\mathbb{Y}^{n+1})$ as
\begin{equation}\label{synthesisS}
\mathcal{M}_{\psi}\Phi(\mathbf{u},p)= \int_{0}^{\infty}\int_{-\infty}^{\infty} \frac{1}{a}\psi\left(\frac{p-b}{a}\right)\Phi(\mathbf{u},b,a)\frac{dbda}{a}.
\end{equation}
A straightforward variant of the method employed in the proofs of Theorem \ref{continuity theorem ridgelet} and Theorem \ref{continuity theorem inverse} applies to show the following continuity result. Alternatively, since $\mathcal{S}(\mathbb{S}^{n-1}\times \mathbb{R})=\mathcal{D}(\mathbb{S}^{n-1})\hat{\otimes}\mathcal{S}(\mathbb{R})$, $\mathcal{S}_{0}(\mathbb{S}^{n-1}\times \mathbb{R})=\mathcal{D}(\mathbb{S}^{n-1})\hat{\otimes}\mathcal{S}_{0}(\mathbb{R})$ and $\mathcal{S}(\mathbb{Y}^{n+1})=\mathcal{D}(\mathbb{S}^{n-1})\hat{\otimes}\mathcal{S}(\mathbb{H})$, the result may also be deduced from a tensor product argument and the continuity of the corresponding mappings on $\mathcal{S}(\mathbb{R})$, $\mathcal{S}_{0}(\mathbb{R})$, and $\mathcal{S}(\mathbb{H})$ (cf. \cite{hols} or \cite{prv}).

\begin{co}\label{waveletanalysisS} The mappings
\begin{itemize}
\item [$(i)$] $\mathcal{W}:\mathcal{S}_0
( \mathbb{S}^{n-1}\times\mathbb{R})\times \mathcal{S}_0
(\mathbb{R})\rightarrow \mathcal{S} (\mathbb{Y}^{n + 1})$
\item [$(ii)$] $ \mathcal{M}: \mathcal{S}
(\mathbb{Y}^{n+1})\times \mathcal{S} (\mathbb{R})\rightarrow \mathcal{S}
(\mathbb{S}^{n-1}\times\mathbb{R}) $
\item [$(iii)$] $\mathcal{M}: \mathcal{S}
(\mathbb{Y}^{n+1})\times \mathcal{S}_{0} (\mathbb{R})\rightarrow \mathcal{S}_{0}
(\mathbb{S}^{n-1}\times\mathbb{R}) $
\end{itemize}
are continuous.
\end{co}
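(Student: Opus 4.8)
The plan is to reduce all three continuity assertions to their one-dimensional counterparts, exploiting that $\mathcal{W}_{\psi}$ and $\mathcal{M}_{\psi}$ operate only in the variable $p$ (respectively $(b,a)$), while the sphere variable $\mathbf{u}\in\mathbb{S}^{n-1}$ enters merely as a smooth parameter. The inputs from the one-dimensional theory, available from Holschneider \cite{hols} and from \cite{prv} (or equivalently from the $n=1$ instances of the arguments in Theorems \ref{continuity theorem ridgelet} and \ref{continuity theorem inverse}), are: the wavelet transform $\mathcal{W}:\mathcal{S}_{0}(\mathbb{R})\times\mathcal{S}_{0}(\mathbb{R})\to\mathcal{S}(\mathbb{H})$ is continuous, and the wavelet synthesis operator is continuous both as $\mathcal{M}:\mathcal{S}(\mathbb{H})\times\mathcal{S}(\mathbb{R})\to\mathcal{S}(\mathbb{R})$ and as $\mathcal{M}:\mathcal{S}(\mathbb{H})\times\mathcal{S}_{0}(\mathbb{R})\to\mathcal{S}_{0}(\mathbb{R})$. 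These supply the model bilinear seminorm inequalities on which everything will rest.

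I would carry out the direct route, mirroring the two preceding proofs. For part $(i)$, one writes out the seminorm $\rho_{s,r}^{l,m,k}(\mathcal{W}_{\psi}g)$ from \eqref{eqNorms}; the decisive simplification is that the wavelet kernel $a^{-1}\overline{\psi}((p-b)/a)$ carries no dependence on $\mathbf{u}$, so that $\triangle_{\mathbf{u}}^{k}$ commutes with $\mathcal{W}_{\psi}$, namely $\triangle_{\mathbf{u}}^{k}\mathcal{W}_{\psi}g=\mathcal{W}_{\psi}(\triangle_{\mathbf{u}}^{k}g)$. Consequently Step 2 in the proof of Theorem \ref{continuity theorem ridgelet}, which had to absorb the polynomials $P_{\alpha,j,d}(\mathbf{u})$ coming from differentiating the ridgelet kernel on the sphere, evaporates entirely, and the estimate reduces to the one-dimensional wavelet bound applied to $\triangle_{\mathbf{u}}^{k}g(\mathbf{u},\cdot)$ with $\mathbf{u}$ frozen. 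Because membership in $\mathcal{S}_{0}(\mathbb{S}^{n-1}\times\mathbb{R})=\mathcal{D}(\mathbb{S}^{n-1})\hat{\otimes}\mathcal{S}_{0}(\mathbb{R})$ means exactly that $\triangle_{\mathbf{u}}^{k}g(\mathbf{u},\cdot)$ ranges over a bounded family in $\mathcal{S}_{0}(\mathbb{R})$, taking the supremum over the compact sphere preserves the bilinear bound and yields the required inequality with suitable seminorms of $g$ and $\psi$. Parts $(ii)$ and $(iii)$ are treated identically using the steps of Theorem \ref{continuity theorem inverse} as template: again $\triangle_{\mathbf{u}}^{k}$ passes through $\mathcal{M}_{\psi}$, the synthesis estimates are applied slicewise in $\mathbf{u}$, and the vanishing-moment bookkeeping of $(iii)$ is precisely the one-dimensional statement $\mathcal{M}:\mathcal{S}(\mathbb{H})\times\mathcal{S}_{0}(\mathbb{R})\to\mathcal{S}_{0}(\mathbb{R})$ transported uniformly in $\mathbf{u}$.

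More conceptually, the same conclusions follow from the nuclearity identifications recorded in Subsection \ref{spaces}, namely $\mathcal{S}_{0}(\mathbb{S}^{n-1}\times\mathbb{R})=\mathcal{D}(\mathbb{S}^{n-1})\hat{\otimes}\mathcal{S}_{0}(\mathbb{R})$, $\mathcal{S}(\mathbb{S}^{n-1}\times\mathbb{R})=\mathcal{D}(\mathbb{S}^{n-1})\hat{\otimes}\mathcal{S}(\mathbb{R})$, and $\mathcal{S}(\mathbb{Y}^{n+1})=\mathcal{D}(\mathbb{S}^{n-1})\hat{\otimes}\mathcal{S}(\mathbb{H})$. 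For fixed $\psi$ each of the three transforms equals $\mathrm{id}_{\mathcal{D}(\mathbb{S}^{n-1})}\otimes T_{\psi}$, with $T_{\psi}$ the relevant one-dimensional operator; since the tensor product of two continuous linear maps extends continuously to the completed tensor products \cite{treves}, continuity in the first argument is immediate.

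The only point demanding genuine attention---and the one I would flag as the main obstacle---is upgrading separate continuity in each slot to joint \emph{bilinear} continuity in $(g,\psi)$ (respectively $(\Phi,\psi)$). The direct route disposes of this automatically, since every bound it produces is already bilinear in the displayed seminorms of the two arguments. In the tensor-product route one must instead note that the one-dimensional maps are continuous as bilinear maps and that tensoring the first factor with $\mathrm{id}_{\mathcal{D}(\mathbb{S}^{n-1})}$ preserves this; for that reason I would favor the direct verification, which sidesteps the issue and is in any event strictly simpler than the proofs of Theorems \ref{continuity theorem ridgelet} and \ref{continuity theorem inverse}.
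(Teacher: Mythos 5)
Your proposal is correct and takes essentially the same approach as the paper: the paper establishes Corollary \ref{waveletanalysisS} exactly by the two routes you describe, namely a straightforward variant of the estimates in the proofs of Theorems \ref{continuity theorem ridgelet} and \ref{continuity theorem inverse} (simplified, as you note, because the kernel does not depend on $\mathbf{u}$, so $\triangle_{\mathbf{u}}$ commutes with the transforms), or alternatively the tensor-product argument based on $\mathcal{S}(\mathbb{Y}^{n+1})=\mathcal{D}(\mathbb{S}^{n-1})\hat{\otimes}\mathcal{S}(\mathbb{H})$, $\mathcal{S}_{0}(\mathbb{S}^{n-1}\times\mathbb{R})=\mathcal{D}(\mathbb{S}^{n-1})\hat{\otimes}\mathcal{S}_{0}(\mathbb{R})$, and the one-dimensional continuity results of \cite{hols,prv}. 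Your concern about upgrading separate to joint bilinear continuity is handled soundly by the bilinear seminorm estimates of the direct route (and is in any case automatic here, since separately continuous bilinear maps on products of Fr\'{e}chet spaces are jointly continuous), so nothing is missing.
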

We end this section with a remark concerning reference \cite{RT}.

\begin{rem}\label{remark Roopkumar} In dimension $n=2$, Roopkumar has considered \cite{RT} the analogs of our Theorem \ref{continuity theorem ridgelet} and Theorem \ref{continuity theorem inverse} for the space $\mathcal{S}_{\#}(\mathbb{Y}^{n+1})$, where  $\mathcal{S}_{\#}(\mathbb{Y}^{n+1})$ consists of all those smooth functions $\Phi$ on $\mathbb{Y}^{n+1}$ satisfying
$$
\gamma^{l,m,k}_{s,r}(\Phi):= \sup_{(\mathbf{u},b,a)\in \mathbb {Y}^{n+1}}
\left|a^{s}b^{r}\frac{{\partial
}^l}{\partial a^l}\frac{{\partial }^m}{\partial
b^m}{\triangle_{\mathbf{u}}^{k}}{ \Phi
}\left(\mathbf{u},b,a \right)\right|{\rm \ }<\infty, \ \  \  \ \  l,m,k,s,r\in\mathbb{N}_{0}.
$$
Observe that his system of seminorms $\{\gamma^{l,m,k}_{s,r}\}$ does not take decay into account for small values of the scaling variable $a$ (the term $a^{-s}$ does not occur in his considerations). He claims \cite[Thrm. 3.1 and 3.3]{RT} to have shown that $\mathcal{R}_{\psi}:\mathcal{S}(\mathbb{R}^{2})\to \mathcal{S}_{\#}(\mathbb{Y}^{3})$ and $\mathcal{R}^{t}_{\psi}:\mathcal{S}_{\#}(\mathbb{Y}^{3})\to \mathcal{S}(\mathbb{R}^{2})$ are continuous when $\psi\in\mathcal{S}(\mathbb{R})$ satisfies the admissibility condition (\ref{admissible}). His proof of the continuity of 
$\mathcal{R}^{t}_{\psi}:\mathcal{S}_{\#}(\mathbb{Y}^{3})\to \mathcal{S}(\mathbb{R}^{2})$ appears to be incorrect because it seems to make use of the erroneous relation $x_{1}\cos\theta+x_{2}\sin\theta= (x_1+ix_2)e^{i\theta}$ \cite[p. 436]{RT}. Furthermore, his result on the continuity of $\mathcal{R}_{\psi}:\mathcal{S}(\mathbb{R}^{2})\to \mathcal{S}_{\#}(\mathbb{Y}^{3})$ turns out to be false because the ridgelet transform $\mathcal{R}_{\psi}$ does not even map $\mathcal{S}(\mathbb{R}^{n})$ into Roopkumar's space $\mathcal{S}_{\#}(\mathbb{Y}^{n+1})$. We show the latter fact with the following example. Choose the admissible function $\widehat{\psi}(\omega)=2\pi^{-n/2+1}\omega^{2n}e^{-\omega^{2}/4}$, $\omega\in\mathbb{R}$, and $\phi(\mathbf{w})=e^{-|\mathbf{w}|^{2}}$, $\mathbf{w}\in\mathbb{R}^{n}$. Then, by (\ref{GrindEQ__2_6_}),
\begin{align*}
\mathcal{R}_{\psi}\phi (\mathbf{u},0,a)&= \int_{-\infty}^{\infty}e^{-\omega^{2}/4}(a\omega)^{2n}e^{-(a\omega)^{2}/4}d\omega=\frac{1}{a}\int_{-\infty}^{\infty}e^{-\omega^{2}/(4a^{2})}\omega^{2n}e^{-\omega^{2}/4}d\omega
\\
&\sim \frac{1}{a}\int_{-\infty}^{\infty}\omega^{2n}e^{-\omega^{2}/4}d\omega=\frac{c}{a}, \ \  \ a\to\infty, 
\end{align*}
where $c\neq 0$. This shows that $\gamma^{0,0,0}_{2,0}(\mathcal{R}_{\psi}\phi)=\infty$. Therefore, $\mathcal{R}_{\psi}\phi\notin\mathcal{S}_{\#}(\mathbb{Y}^{n+1})$.
\end{rem}


\section{The ridgelet transform on $\mathcal{S}'_{0}({\mathbb{R}^{n}})$}
\label{ridgelet distributions}

We are ready to define the ridgelet transform of Lizorkin distributions. 

\begin{de} \label{def1} Let $\psi\in\mathcal S_{0}(\mathbb R)$. We define the ridgelet transform of $f\in\mathcal S'_{0}(\mathbb R^n)$ with respect to $\psi$ as the element $\mathcal{R}_{\psi}f\in \mathcal{S}'(\mathbb{Y}^{n+1})$ whose action on test functions is given by 
\begin{equation}\label{def11}\langle \mathcal{R}_{\psi}{f}, \Phi\rangle:=\langle f, \mathcal{R}^{t}_{\overline{\psi}}{\Phi}\rangle, \ \ \  \Phi \in \mathcal{S}(\mathbb{Y}^{n+1}).\end{equation}
\end{de}

The consistence of Definition \ref{def1} is guaranteed by Theorem \ref{continuity theorem inverse}. Likewise, Theorem \ref{continuity theorem ridgelet} allows us  to define the ridgelet synthesis operator $\mathcal{R}_{\psi}^{t}$ for $\psi\in\mathcal{S}_{0}(\mathbb{R})$ as a linear mapping from $\mathcal{S}'(\mathbb{Y}^{n+1})$ to $\mathcal{S}'_{0}({\mathbb{R}^{n}})$ (and not to $\mathcal{S}'({\mathbb{R}^{n}})$).

\begin{de}\label{def3} Let $\psi\in\mathcal{S}_{0}(\mathbb{R})$. The ridgelet synthesis operator $\mathcal{R}^{t}_{\psi} :\mathcal{S}'(\mathbb{Y}^{n+1})\to\mathcal{S}'_{0}({\mathbb{R}^{n}})$ is defined as
\begin{equation}\label{def21}\langle \mathcal{R}^{t}_{\psi}F, \phi\rangle:=\langle F, \mathcal{R}_{\overline{\psi}}{\phi}\rangle, \  \  \ F\in\mathcal{S}' (\mathbb{Y}^{n+1}),\ \phi \in \mathcal S({\mathbb R^{n}}).\end{equation}
\end{de}

Taking transposes in Theorems \ref{continuity theorem ridgelet} and  \ref{continuity theorem inverse}, we immediately obtain the ensuing continuity result.

\begin{pro} 
\label{pr1} Let $\psi\in\mathcal{S}_{0}(\mathbb{R})$. The ridgelet transform $\mathcal{R}_{\psi}:\mathcal{S}'_{0}({\mathbb{R}^{n}})\to \mathcal{S}'(\mathbb{Y}^{n+1})$ and the ridgelet synthesis operator $\mathcal{R}^{t}_{\psi} :\mathcal{S}'(\mathbb{Y}^{n+1})\to\mathcal{S}'_{0}({\mathbb{R}^{n}})$ are  continuous linear maps.
\end{pro}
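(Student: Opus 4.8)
The plan is to obtain both continuity statements purely formally, as an immediate consequence of the two continuity theorems for the test-function-space mappings (Theorems \ref{continuity theorem ridgelet} and \ref{continuity theorem inverse}) together with the general principle that the transpose of a continuous linear map between locally convex spaces is continuous for the strong dual topologies. Since the paper equips every distribution space with its strong dual topology (cf. Subsection \ref{spaces}), this principle applies directly, and no hard analysis is needed at this stage; the real work has already been carried out in Section \ref{ridgelet test functions}.

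For the ridgelet synthesis operator $\mathcal{R}^{t}_{\psi}:\mathcal{S}'(\mathbb{Y}^{n+1})\to\mathcal{S}'_{0}(\mathbb{R}^{n})$, first I would fix a non-trivial $\psi\in\mathcal{S}_{0}(\mathbb{R})$ and observe that $\overline{\psi}\in\mathcal{S}_{0}(\mathbb{R})$ as well. By Theorem \ref{continuity theorem ridgelet}, the linear map $\phi\mapsto\mathcal{R}_{\overline{\psi}}\phi$ is continuous from $\mathcal{S}_{0}(\mathbb{R}^{n})$ into $\mathcal{S}(\mathbb{Y}^{n+1})$; this is exactly the map appearing on the right-hand side of the defining identity (\ref{def21}). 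Its transpose is therefore a continuous linear map $\mathcal{S}'(\mathbb{Y}^{n+1})\to\mathcal{S}'_{0}(\mathbb{R}^{n})$, and by Definition \ref{def3} this transpose is precisely $\mathcal{R}^{t}_{\psi}$. (One should also note that $\mathcal{S}_{0}(\mathbb{R}^{n})$ is a closed subspace of $\mathcal{S}(\mathbb{R}^{n})$, so that the pairing in (\ref{def21}), written against $\phi\in\mathcal{S}(\mathbb{R}^{n})$, descends to the quotient and is well defined on $\mathcal{S}'_{0}(\mathbb{R}^{n})$; this is the consistency already recorded after Definition \ref{def1}.)

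For the ridgelet transform $\mathcal{R}_{\psi}:\mathcal{S}'_{0}(\mathbb{R}^{n})\to\mathcal{S}'(\mathbb{Y}^{n+1})$ the argument is entirely symmetric. By Theorem \ref{continuity theorem inverse}, the linear map $\Phi\mapsto\mathcal{R}^{t}_{\overline{\psi}}\Phi$ is continuous from $\mathcal{S}(\mathbb{Y}^{n+1})$ into $\mathcal{S}_{0}(\mathbb{R}^{n})$, and this is the map on the right-hand side of (\ref{def11}). Dualizing, its transpose is a continuous linear map $\mathcal{S}'_{0}(\mathbb{R}^{n})\to\mathcal{S}'(\mathbb{Y}^{n+1})$, which by Definition \ref{def1} coincides with $\mathcal{R}_{\psi}$. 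Thus both maps are continuous, as asserted.

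The only point requiring mild care—and the nearest thing to an obstacle—is purely topological bookkeeping rather than estimation: one must confirm that the strong dual of a continuous map is again continuous in the present setting. This is automatic for the Fréchet (indeed nuclear) spaces $\mathcal{S}_{0}(\mathbb{R}^{n})$, $\mathcal{S}(\mathbb{Y}^{n+1})$, and $\mathcal{S}(\mathbb{Y}^{n+1})\to\mathcal{S}_{0}(\mathbb{R}^{n})$ involved here, since the transpose of a continuous linear map between such spaces is always strongly continuous. Because all the quantitative content is already encapsulated in the seminorm estimates (\ref{GrindEQ__3_1_}) and its counterpart for $\mathcal{R}^{t}$, the proof reduces to invoking these two theorems and the definition of the transpose, and hence is immediate.
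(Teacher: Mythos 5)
Your proof is correct and takes essentially the same route as the paper: Proposition \ref{pr1} is obtained there in one line by taking transposes in Theorems \ref{continuity theorem ridgelet} and \ref{continuity theorem inverse}, exactly as you do, identifying $\mathcal{R}_{\psi}$ and $\mathcal{R}^{t}_{\psi}$ on distributions via Definitions \ref{def1} and \ref{def3} as the transposes of $\Phi\mapsto\mathcal{R}^{t}_{\overline{\psi}}\Phi$ and $\phi\mapsto\mathcal{R}_{\overline{\psi}}\phi$, respectively. Your only inessential overstatement is attributing the strong continuity of the transpose to the Fr\'{e}chet/nuclear structure of the spaces; this is a general fact for continuous linear maps between arbitrary locally convex spaces endowed with strong dual topologies.
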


We can generalize the reconstruction formula (\ref{reconstruction1}) to distributions.

\begin{theorem}[Inversion formula] \label{reconstruction distributions} Let $\psi\in\mathcal{S}_{0}(\mathbb{R})$ be non-trivial. If $\eta\in\mathcal{S}_{0}(\mathbb{R})$ is a reconstruction neuronal activation function for $\psi$, then
\begin{equation}\label{eqidentity}
\operatorname{id}_{\mathcal{S}_{0}'(\mathbb {R}^n)}=\frac{1}{K_{\psi,\eta}}(\mathcal{R}_{\eta}^{t}\circ \mathcal{R_{\psi}}).
\end{equation}
\end{theorem}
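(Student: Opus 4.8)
The plan is to prove the operator identity (\ref{eqidentity}) by testing against an arbitrary $\phi\in\mathcal{S}_0(\mathbb{R}^n)$ and reducing everything, through the defining duality relations (\ref{def11}) and (\ref{def21}), to the already established pointwise reconstruction formula of Proposition~\ref{theoremreconstruction}. Fix $f\in\mathcal{S}'_0(\mathbb{R}^n)$ and $\phi\in\mathcal{S}_0(\mathbb{R}^n)$. First I would unwind the composition: by Definition~\ref{def3} applied with $F=\mathcal{R}_\psi f$,
\[
\langle \mathcal{R}^t_\eta(\mathcal{R}_\psi f),\phi\rangle = \langle \mathcal{R}_\psi f,\mathcal{R}_{\overline\eta}\phi\rangle,
\]
where $\mathcal{R}_{\overline\eta}\phi\in\mathcal{S}(\mathbb{Y}^{n+1})$ is a legitimate test function by Theorem~\ref{continuity theorem ridgelet} (applied to $\phi\in\mathcal{S}_0(\mathbb{R}^n)$ and $\overline\eta\in\mathcal{S}_0(\mathbb{R})$). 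Then Definition~\ref{def1} gives
\[
\langle \mathcal{R}_\psi f,\mathcal{R}_{\overline\eta}\phi\rangle = \langle f,\mathcal{R}^t_{\overline\psi}(\mathcal{R}_{\overline\eta}\phi)\rangle.
\]
Thus the whole statement reduces to the single test-function identity $\mathcal{R}^t_{\overline\psi}(\mathcal{R}_{\overline\eta}\phi)=K_{\psi,\eta}\,\phi$.

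To establish this identity I would invoke Proposition~\ref{theoremreconstruction} with the roles of the analysis and reconstruction functions now played by $\overline\eta$ and $\overline\psi$, respectively. Since $\phi\in\mathcal{S}_0(\mathbb{R}^n)\subset L^1(\mathbb{R}^n)$ satisfies $\widehat{\phi}\in L^1(\mathbb{R}^n)$, the hypotheses of that proposition are met as soon as $\overline\psi$ is a reconstruction neuronal activation function for $\overline\eta$, and in that case it yields $\mathcal{R}^t_{\overline\psi}\mathcal{R}_{\overline\eta}\phi=K_{\overline\eta,\overline\psi}\,\phi$. It therefore remains to verify the admissibility condition and, crucially, the symmetry $K_{\overline\eta,\overline\psi}=K_{\psi,\eta}$.

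I expect the main (though routine) obstacle to be precisely this conjugation-and-exchange symmetry of the constant. Using $\widehat{\overline{g}}(\omega)=\overline{\widehat{g}(-\omega)}$ together with the substitution $\omega\mapsto-\omega$ and the evenness of $|\omega|^{n}$ in (\ref{admiss}), a short computation gives
\[
K_{\overline\eta,\overline\psi}=(2\pi)^{n-1}\int_{-\infty}^{\infty}\overline{\widehat{\overline\eta}}(\omega)\,\widehat{\overline\psi}(\omega)\,\frac{d\omega}{|\omega|^{n}}=(2\pi)^{n-1}\int_{-\infty}^{\infty}\overline{\widehat{\psi}}(\omega)\,\widehat{\eta}(\omega)\,\frac{d\omega}{|\omega|^{n}}=K_{\psi,\eta}.
\]
In particular $K_{\overline\eta,\overline\psi}$ is finite and non-zero (equal to $K_{\psi,\eta}$), while $\overline\eta$ and $\overline\psi$ are non-trivial, so $\overline\psi$ is indeed a reconstruction function for $\overline\eta$ and the invocation above is justified. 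Chaining the three displays then gives $\langle\mathcal{R}^t_\eta(\mathcal{R}_\psi f),\phi\rangle=K_{\psi,\eta}\langle f,\phi\rangle$ for every $\phi\in\mathcal{S}_0(\mathbb{R}^n)$; dividing by $K_{\psi,\eta}$ shows $\frac{1}{K_{\psi,\eta}}(\mathcal{R}^t_\eta\circ\mathcal{R}_\psi)f=f$ in $\mathcal{S}'_0(\mathbb{R}^n)$, and since $f$ was arbitrary this is the claimed identity (\ref{eqidentity}).
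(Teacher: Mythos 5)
Your proposal is correct and follows essentially the same route as the paper's proof: both unwind $\langle \mathcal{R}^{t}_{\eta}(\mathcal{R}_{\psi}f),\phi\rangle$ through Definitions \ref{def3} and \ref{def1} to $\langle f,\mathcal{R}^{t}_{\overline{\psi}}(\mathcal{R}_{\overline{\eta}}\,\phi)\rangle$ and then apply Proposition \ref{theoremreconstruction} with the pair $(\overline{\eta},\overline{\psi})$ to get $K_{\overline{\eta},\overline{\psi}}\langle f,\phi\rangle=K_{\psi,\eta}\langle f,\phi\rangle$. The only difference is that you explicitly verify the symmetry $K_{\overline{\eta},\overline{\psi}}=K_{\psi,\eta}$ (correctly, via $\widehat{\overline{g}}(\omega)=\overline{\widehat{g}(-\omega)}$ and $\omega\mapsto-\omega$), a step the paper asserts without computation.
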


\begin{proof} Applying Definition \ref{def1}, Definition \ref{def3}, and Proposition \ref{theoremreconstruction}, we obtain at once
$$
\langle \mathcal{R}_{\eta}^{t}( \mathcal{R_{\psi}}f),\phi\rangle= \langle f,\mathcal{R}_{\overline{\psi}}^{t}( \mathcal{R_{\overline{\eta}}}\:\phi)\rangle=K_{\overline{\eta},\overline{\psi}}\langle f,\phi\rangle={K_{\psi,\eta}}\langle f,\phi\rangle.
$$
\end{proof}

In Subsection \ref{ridgelet transform functions} we have given a different definition of the ridgelet transform of distributions $f\in\mathcal{D}' _{L^{1}}(\mathbb{R}^{n})$ via the formula (\ref{ridgelet}). We now show that Definition \ref{def1} is consistent with (\ref{ridgelet}) (under our convention (\ref{regular1}) for identifying functions with distributions on $\mathbb{Y}^{n+1}$). In particular, our definition of the ridgelet transform for distributions is consistent with that for test functions.

\begin{theorem} \label{consistence} Let $f\in\mathcal{D}' _{L^{1}}(\mathbb{R}^{n})$. The ridgelet transform of $f$ is given by the function (\ref{ridgelet}), that is,
\begin{equation}
\label{eq consistence}
\langle \mathcal{R}_{\psi}{f}, \Phi\rangle= \int^{\infty
}_{0}\int^{\infty }_{-\infty }\int_{\mathbb{S}^{n-1}} \mathcal{R}_{\psi}f(\mathbf{u},b,a)\Phi(\mathbf{u},b,a) \frac{d\mathbf{u}dbda}{a^n}, \    \    \   \Phi\in\mathcal{S}(\mathbb{Y}^{n+1}).
\end{equation}
\end{theorem}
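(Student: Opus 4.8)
The plan is to unwind Definition \ref{def1} and reduce the claim to an application of Fubini's theorem. By \eqref{def11} together with the definition \eqref{synthesis} of the ridgelet synthesis operator, the assertion \eqref{eq consistence} is equivalent to the commutation identity
\begin{equation*}
\left\langle f,\int_{\mathbb{S}^{n-1}}\int_0^\infty\int_{-\infty}^\infty \Phi(\mathbf{u},b,a)\,\overline{\psi}_{\mathbf{u},b,a}\,\frac{dbda\,d\mathbf{u}}{a^n}\right\rangle=\int_{\mathbb{S}^{n-1}}\int_0^\infty\int_{-\infty}^\infty \left\langle f,\overline{\psi}_{\mathbf{u},b,a}\right\rangle \Phi(\mathbf{u},b,a)\,\frac{dbda\,d\mathbf{u}}{a^n},
\end{equation*}
that is, the statement that $f$ may be interchanged with the $\mathbb{Y}^{n+1}$-integral defining $\mathcal{R}^t_{\overline\psi}\Phi$. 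To make this rigorous I would invoke the structure theorem for integrable distributions \cite{scwartz}: one may write $f=\sum_{|\alpha|\leq N}\partial^\alpha f_\alpha$ with each $f_\alpha\in L^1(\mathbb{R}^n)$, so that $\langle f,\chi\rangle=\sum_{|\alpha|\leq N}(-1)^{|\alpha|}\int_{\mathbb{R}^n}f_\alpha(\mathbf{x})\,\partial^\alpha\chi(\mathbf{x})\,d\mathbf{x}$ for every $\chi\in\mathcal{D}_{L^\infty}(\mathbb{R}^n)$.

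Before interchanging, I would first verify that the right-hand side of \eqref{eq consistence} is meaningful, i.e.\ that $\mathcal{R}_\psi f$, computed by \eqref{ridgelet}, is a continuous function of slow growth on $\mathbb{Y}^{n+1}$, hence defines an element of $\mathcal{S}'(\mathbb{Y}^{n+1})$ through \eqref{regular1}. The key elementary estimate is
\[
\left|\partial^\alpha_{\mathbf{x}}\overline{\psi}_{\mathbf{u},b,a}(\mathbf{x})\right|=a^{-1-|\alpha|}\left|\mathbf{u}^\alpha\,\overline{\psi}^{(|\alpha|)}\!\left(\tfrac{\mathbf{x}\cdot\mathbf{u}-b}{a}\right)\right|\leq a^{-1-|\alpha|}\big\|\psi^{(|\alpha|)}\big\|_{L^\infty},
\]
valid for all $\mathbf{x}\in\mathbb{R}^n$ and $(\mathbf{u},b,a)\in\mathbb{Y}^{n+1}$. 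Pairing with $f_\alpha\in L^1$ gives $|\mathcal{R}_\psi f(\mathbf{u},b,a)|\leq C\sum_{|\alpha|\leq N}a^{-1-|\alpha|}\leq C'(a^s+a^{-s})$ for $s=N+1$, so $\mathcal{R}_\psi f$ satisfies the slow-growth bound preceding \eqref{regular1}.

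With these preparations, I would carry out the interchange in two moves. Writing $\langle f,\mathcal{R}^t_{\overline\psi}\Phi\rangle=\sum_{|\alpha|\leq N}(-1)^{|\alpha|}\int_{\mathbb{R}^n}f_\alpha\,\partial^\alpha(\mathcal{R}^t_{\overline\psi}\Phi)\,d\mathbf{x}$, I would first differentiate under the $\mathbb{Y}^{n+1}$-integral---legitimate because the sup-bound above, combined with the rapid decay of $\Phi$, dominates the differentiated integrand uniformly for $\mathbf{x}$ in compacta---and then apply Fubini's theorem to exchange $\int_{\mathbb{R}^n}$ with $\int_{\mathbb{Y}^{n+1}}$; recognizing the inner $\mathbf{x}$-integral as $\langle f,\overline\psi_{\mathbf{u},b,a}\rangle=\mathcal{R}_\psi f(\mathbf{u},b,a)$ then yields \eqref{eq consistence}. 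The only point needing genuine justification is the absolute convergence underlying Fubini, namely the finiteness of
\[
\sum_{|\alpha|\leq N}\int_{\mathbb{R}^n}|f_\alpha(\mathbf{x})|\int_{\mathbb{S}^{n-1}}\int_0^\infty\int_{-\infty}^\infty|\Phi(\mathbf{u},b,a)|\,a^{-1-|\alpha|}\big\|\psi^{(|\alpha|)}\big\|_{L^\infty}\,\frac{dbda\,d\mathbf{u}}{a^n}\,d\mathbf{x}.
\]
Here $\int|f_\alpha|<\infty$, $\int_{\mathbb{S}^{n-1}}d\mathbf{u}<\infty$, and, using $\Phi\in\mathcal{S}(\mathbb{Y}^{n+1})$ in the form $|\Phi(\mathbf{u},b,a)|\leq C(a^s+a^{-s})^{-1}(1+b^2)^{-r/2}$, the remaining $(b,a)$-integral converges once $r>1$ and $s>N+n$: the factor $(a^s+a^{-s})^{-1}$ behaves like $a^{s}$ as $a\to0^+$ and like $a^{-s}$ as $a\to\infty$, which is exactly what tames the singular weight $a^{-1-|\alpha|-n}$ at the origin. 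I expect this convergence estimate to be the sole real obstacle, and it is precisely here that the $a^{-s}$ decay built into the seminorms \eqref{eqNorms} of $\mathcal{S}(\mathbb{Y}^{n+1})$---absent in Roopkumar's space---is indispensable. Alternatively, one could recast the argument by reading $\mathcal{R}^t_{\overline\psi}\Phi$ as a $\mathcal{D}_{L^\infty}(\mathbb{R}^n)$-valued Pettis integral and pushing the continuous functional $f$ through the integral sign, but the structure-theorem computation above is more transparent.
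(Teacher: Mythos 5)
Your proof is correct and follows essentially the same route as the paper: both decompose $f$ via the Schwartz structure theorem for $\mathcal{D}'_{L^{1}}(\mathbb{R}^{n})$ and reduce the duality identity to an interchange of integrals justified by Fubini's theorem. The only difference is bookkeeping: the paper transfers the derivatives onto the neuronal activation function through the intertwining relation $\mathcal{R}_{\psi}(f_{j}^{(m_{j})})=(-a^{-1}\mathbf{u})^{m_{j}}\mathcal{R}_{\psi^{(m_{j})}}f_{j}$ and then invokes Proposition \ref{transpose proposition} for the $L^{1}$ case, whereas you keep the derivatives on $\mathcal{R}^{t}_{\overline{\psi}}\Phi$, differentiate under the integral, and verify the dominating estimates explicitly---in effect supplying inline the convergence details that the paper delegates to Proposition \ref{transpose proposition} and leaves to the reader.
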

\begin{proof} By Schwartz' structural theorem \cite{scwartz}, we can write $f=\sum_{j=1}^{N}f_{j}^{(m_j)}$, where each $f_{j}\in L^{1}(\mathbb{R}^{n})$. 
Observe first that
$$
\langle f_{j}^{(m_{j})},\psi_{\mathbf{u},b,a}\rangle= \left(-a^{-1}\mathbf{u}\right) ^{m_{j}} \langle f_{j},(\psi^{(m_{j})})_{\mathbf{u},b,a}\rangle.
$$
On the other hand, since
$$
(-1)^{|m_{j}|}\frac{\partial^{|m|_{j}}}{\partial x^{m_{j}}} \mathcal{R}_{\overline{\psi}}^{t} \Phi= \mathcal{R}_{\overline{\psi}^{(m_{j})}} ^{t}\left(\left(-a^{-1}\mathbf{u}\right) ^{m_{j}} \Phi\right),
$$
the ridgelet transform $\mathcal{R}_{\psi}{f}$, defined via (\ref{def11}), satisfies $$\mathcal{R}_{\psi}(f_{j}^{(m_{j})})=\left(-a^{-1}\mathbf{u}\right) ^{m_{j}}\mathcal{R}_{\psi^{(m_{j})}}f_{j}.$$ Therefore, we may assume that $f\in L^{1}(\mathbb{R}^{n})$. But in the latter case, the result is a consequence of Proposition \ref{transpose proposition}.
\end{proof}
\begin{rem} Let us point out that (\ref{eq consistence}) holds in particular for compactly supported distributions $f\in\mathcal{E}'(\mathbb{R}^{n})$ or, more generally, for convolutors $f\in\mathcal{O}'_{C}(\mathbb{R}^{n})$. Furthermore, when $f\in\mathcal{O}'_{C}(\mathbb{R}^{n})$, one can easily check that $\mathcal{R}_{\psi}f\in C^{\infty}(\mathbb{Y}^{n+1})$.
\end{rem}
\section{On the Radon transform on $\mathcal{S}_{0}'(\mathbb{R}^{n})$}
\label{section Radon transform}

In this section we explain how one can define the Radon transform of Lizorkin distributions. Its connection with the ridgelet and wavelet transforms will be discussed in Section \ref{section desingularization}.

We begin with test functions. Helgason \cite{helgason} and Gelfand et al. \cite{gelfand5} gave the range theorem for the Radon transform on $\mathcal{S}(\mathbb{R}^{n})$. Indeed, its range $R(\mathcal{S}(\mathbb{R}^{n}))$ consists of the closed subspace of all those $\varrho\in \mathcal{S}(\mathbb{S}^{n-1}\times \mathbb{R})$ such that $\varrho$ is even on $\mathbb{S}^{n-1}\times \mathbb{R}$, i.e.,  $\varrho(-\mathbf{u},-p)=\varrho(\mathbf{u},p)$, and  $\int_{-\infty}^{\infty}p^{k}\varrho(\mathbf{u},p)$ is a $k$-th degree homogeneous polynomial in $\mathbf{u}$ for all $k\in\mathbb{N}_{0}$. The situation is not so satisfactory for the dual Radon transform $R^{\ast}$, because it does not map $\mathcal{S}(\mathbb{S}^{n-1}\times \mathbb{R})$ to $\mathcal{S}(\mathbb{R}^{n})$. Consequently, the duality relation (\ref{Radonduality}) fails to produce a definition for the Radon transform on $\mathcal{S}'(\mathbb{R}^{n})$. The Radon transform on $\mathcal{S}'(\mathbb{R}^{n})$ can be defined \cite{gelfand5,ludwig,ram}, but it does not take values in $\mathcal{S}'(\mathbb{S}^{n-1}\times \mathbb{R})$. The range $R(\mathcal{S}'(\mathbb{R}^{n}))$ is particularly complicated to describe in even dimensions $n$.

As Helgason points out \cite{helgason}, a more satisfactory situation is obtained if we restrict our attention to the smaller test function spaces $\mathcal{S}_{0}(\mathbb{R}^{n})$ and $\mathcal{S}_{0}(\mathbb{S}^{n-1}\times\mathbb{R})$. In such a case, 
\begin{equation} \label{radontest}R:\mathcal{S}_{0}(\mathbb{R}^{n})\to \mathcal{S}_{0}(\mathbb{S}^{n-1}\times\mathbb{R})
\end{equation}
and  
\begin{equation} \label{dualradontest}R^{\ast}:\mathcal{S}_{0}(\mathbb{S}^{n-1}\times\mathbb{R})\to\mathcal{S}_{0}(\mathbb{R}^{n}).
\end{equation}
We apply our results from Section \ref{ridgelet test functions} to deduce the following continuity result for $R$ and $R^{\ast}$.

\begin{co}\label{radonc1} The mappings (\ref{radontest}) and (\ref{dualradontest}) are continuous.
\end{co}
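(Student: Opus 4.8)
The plan is to deduce the continuity of the Radon mappings $R$ and $R^{\ast}$ from the continuity of the ridgelet transform and synthesis operator (Theorems \ref{continuity theorem ridgelet} and \ref{continuity theorem inverse}) together with the wavelet continuity results of Corollary \ref{waveletanalysisS}. The key structural identity is the factorization (\ref{rad-rid}), namely $\mathcal{R}_{\psi}f = \mathcal{W}_{\psi}(Rf)$, which exhibits the ridgelet transform as the composition of the Radon transform followed by a one-dimensional wavelet transform acting in the $p$-variable. The idea is to invert the wavelet step: by the wavelet reconstruction formula (the one-dimensional analog of Proposition \ref{theoremreconstruction}, realized on $\mathcal{S}_0(\mathbb{S}^{n-1}\times\mathbb{R})$ via the synthesis operator $\mathcal{M}_\psi$ of (\ref{synthesisS})), one can recover $Rf$ from $\mathcal{R}_\psi f$ up to the constant $K_{\psi,\eta}$.

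First I would fix a non-trivial $\psi\in\mathcal{S}_0(\mathbb{R})$ together with a reconstruction neuronal activation function $\eta\in\mathcal{S}_0(\mathbb{R})$, so that $K_{\psi,\eta}\neq 0$. The wavelet inversion on the real line gives $\mathcal{M}_\eta(\mathcal{W}_\psi g) = K_{\psi,\eta}\, g$ for $g\in\mathcal{S}_0(\mathbb{R})$, and since all operators act only in the $p$-variable and depend smoothly on $\mathbf{u}\in\mathbb{S}^{n-1}$, this lifts to $\mathcal{M}_\eta\circ\mathcal{W}_\psi = K_{\psi,\eta}\,\operatorname{id}$ on $\mathcal{S}_0(\mathbb{S}^{n-1}\times\mathbb{R})$. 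Combining this with (\ref{rad-rid}) yields the factorization
\begin{equation*}
R = \frac{1}{K_{\psi,\eta}}\,\mathcal{M}_{\eta}\circ\mathcal{R}_{\psi}
\end{equation*}
as a map $\mathcal{S}_0(\mathbb{R}^n)\to\mathcal{S}_0(\mathbb{S}^{n-1}\times\mathbb{R})$. The right-hand side is a composition of two continuous maps: $\mathcal{R}_\psi\colon\mathcal{S}_0(\mathbb{R}^n)\to\mathcal{S}(\mathbb{Y}^{n+1})$ by Theorem \ref{continuity theorem ridgelet}, and $\mathcal{M}_\eta\colon\mathcal{S}(\mathbb{Y}^{n+1})\to\mathcal{S}_0(\mathbb{S}^{n-1}\times\mathbb{R})$ by part $(iii)$ of Corollary \ref{waveletanalysisS}. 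Hence $R$ in (\ref{radontest}) is continuous. I should check that the composition indeed reproduces $R$ and lands in the correct (even) subspace, but this is forced by Helgason's range theorem already quoted.

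For the dual Radon transform, I would argue by transposition combined with the analogous factorization for the synthesis side. The cleanest route is to observe that $R^\ast$ satisfies the dual factorization $R^\ast = \frac{1}{K_{\psi,\eta}}\,\mathcal{R}^{t}_{\psi}\circ\mathcal{W}_{\eta}$, which follows from the transpose relations (the continuous wavelet analog and Proposition \ref{transpose proposition}) applied to the identity $\langle Rf,\varrho\rangle = \langle f, R^\ast\varrho\rangle$ of (\ref{Radonduality}). Then continuity of $R^\ast$ in (\ref{dualradontest}) follows by composing $\mathcal{W}_\eta\colon\mathcal{S}_0(\mathbb{S}^{n-1}\times\mathbb{R})\to\mathcal{S}(\mathbb{Y}^{n+1})$ (part $(i)$ of Corollary \ref{waveletanalysisS}) with $\mathcal{R}^{t}_{\psi}\colon\mathcal{S}(\mathbb{Y}^{n+1})\to\mathcal{S}_0(\mathbb{R}^n)$ (Theorem \ref{continuity theorem inverse}).

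The main obstacle I anticipate is justifying the wavelet inversion identities $\mathcal{M}_\eta\circ\mathcal{W}_\psi = K_{\psi,\eta}\operatorname{id}$ and its transpose at the level of the sphere-parametrized spaces, since the factorization (\ref{rad-rid}) and the reconstruction formulas were stated for the scalar wavelet transform on $\mathbb{R}$ and for integrable $f$. The cleanest way to dispatch this is via the tensor product descriptions $\mathcal{S}(\mathbb{Y}^{n+1})=\mathcal{D}(\mathbb{S}^{n-1})\hat{\otimes}\mathcal{S}(\mathbb{H})$ and $\mathcal{S}_0(\mathbb{S}^{n-1}\times\mathbb{R})=\mathcal{D}(\mathbb{S}^{n-1})\hat{\otimes}\mathcal{S}_0(\mathbb{R})$ recorded in Subsection \ref{spaces}: the wavelet operators act trivially (as the identity) on the $\mathcal{D}(\mathbb{S}^{n-1})$ factor, so the one-dimensional inversion extends by density and continuity of the $\hat{\otimes}$-construction. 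Once this point is handled, everything else is a formal composition of previously established continuous maps.
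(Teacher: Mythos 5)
Your overall strategy---factor $R$ as a wavelet synthesis applied to the ridgelet transform, and $R^{\ast}$ as a ridgelet synthesis applied to a wavelet transform, then compose the continuous maps from Theorems \ref{continuity theorem ridgelet}, \ref{continuity theorem inverse} and Corollary \ref{waveletanalysisS}---is exactly the paper's. But two of your key identities are false as stated. First, the wavelet inversion $\mathcal{M}_{\eta}\circ\mathcal{W}_{\psi}=K_{\psi,\eta}\operatorname{id}$ on $\mathcal{S}_{0}(\mathbb{S}^{n-1}\times\mathbb{R})$: the constant $K_{\psi,\eta}$ of (\ref{admiss}) is the \emph{ridgelet} constant, weighted by $|\omega|^{-n}$, and is not the one-dimensional wavelet reconstruction constant. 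Wavelet inversion holds with the constant $c_{\psi,\eta}$ of (\ref{waveletreconstruction}), weighted by $|\omega|^{-1}$, and only under the additional requirement that the positive- and negative-frequency integrals there coincide; a reconstruction neuronal activation function need not satisfy this. For instance, if $\widehat{\psi}$ and $\widehat{\eta}$ are smooth and compactly supported in $(0,\infty)$, then $K_{\psi,\eta}$ can be nonzero and finite while $\mathcal{M}_{\eta}\mathcal{W}_{\psi}$ equals a constant times the projection onto positive frequencies, not a multiple of the identity. The paper avoids this by choosing $\eta$ to be a \emph{reconstruction wavelet} for $\psi$ in Holschneider's sense (such an $\eta$ always exists in $\mathcal{S}_{0}(\mathbb{R})$); with that choice your factorization becomes $R=c_{\psi,\eta}^{-1}\mathcal{M}_{\eta}\mathcal{R}_{\psi}$, which is the paper's proof of the continuity of (\ref{radontest}) verbatim.

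Second, and more seriously, your dual factorization $R^{\ast}=K_{\psi,\eta}^{-1}\,\mathcal{R}^{t}_{\psi}\circ\mathcal{W}_{\eta}$ ignores the measure mismatch between the two synthesis operators: $\mathcal{R}^{t}_{\psi}$ integrates against $a^{-n}\,db\,da\,d\mathbf{u}$, whereas $\mathcal{M}$ uses $a^{-2}\,db\,da$ in each slice, so that in fact $\mathcal{R}^{t}_{\psi}=R^{\ast}\mathcal{M}_{\psi}J_{1-n}$, with $J_{s}$ the multiplier of (\ref{multiplier}). A Fourier computation in the $p$-variable then shows that $\mathcal{R}^{t}_{\psi}\mathcal{W}_{\eta}\varrho=R^{\ast}(\Lambda\varrho)$, where $\Lambda$ is (a constant times) the Fourier multiplier $|\omega|^{n-1}$ acting in $p$ (with possibly different constants on the two half-lines)---not a constant multiple of $R^{\ast}\varrho$. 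So your composition is continuous, but it is not $R^{\ast}$, and formal transposition cannot rescue the formula: transposing the corrected identity for $R$ against the standard measure on $\mathbb{Y}^{n+1}$ produces precisely an $a^{n-1}$ factor. The cure is to insert the continuous multiplier $J_{n-1}$, as the paper does: $R^{\ast}=c_{\psi,\eta}^{-1}R^{\ast}\mathcal{M}_{\eta}J_{1-n}J_{n-1}\mathcal{W}_{\psi}=c_{\psi,\eta}^{-1}\mathcal{R}^{t}_{\eta}J_{n-1}\mathcal{W}_{\psi}$, after which continuity of (\ref{dualradontest}) follows from Theorem \ref{continuity theorem inverse} and Corollary \ref{waveletanalysisS} exactly as you intended. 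Your tensor-product remark for lifting the one-dimensional inversion to $\mathbb{S}^{n-1}\times\mathbb{R}$ is sound and consistent with Subsection \ref{spaces}, but it does not repair either of the two gaps above.
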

\begin{proof}  Let $\psi\in\mathcal{S}_{0}(\mathbb{R})$ have a reconstruction wavelet \cite{hols} $\eta\in\mathcal{S}_{0}(\mathbb{R})$, that is, one that satisfies
\begin{equation}
\label{waveletreconstruction}
c_{\psi,\eta}=\int^{\infty }_{0}\overline{\widehat{\psi }}(\omega){\widehat{\eta }}(\omega)\frac{d\omega}{\omega}=\int^{0}_{-\infty}\overline{\widehat{\psi }}(\omega){\widehat{\eta }}(\omega)\frac{d\omega}{|\omega|}\neq 0.
\end{equation}
     
From the one-dimensional reconstruction formula \cite{hols}, we obtain $c_{\psi,\eta}\mathrm{id}_{\mathcal{S}_{0}(\mathbb{S}^{n-1}\times \mathbb{R})}=\mathcal{M}_{\eta}\mathcal{W}_{\psi}$. By (\ref{rad-rid}), 
$R=c_{\psi,\eta}^{-1}(\mathcal{M}_{\eta}\mathcal{R}_{\psi})$,
 and so the continuity of $R$ follows from Theorem \ref{continuity theorem ridgelet} and Corollary \ref{waveletanalysisS}. Next, define the (continuous) multiplier operators 
\begin{equation}
\label{multiplier}J_{s}:\mathcal{S}(\mathbb{Y}^{n+1})\to \mathcal{S}(\mathbb{Y}^{n+1}), \ \ \ (J_{s}\Phi)(\mathbf{u},b,a)=a^{s}\Phi(\mathbf{u},b,a),  \ \  \ s\in\mathbb{R}.
\end{equation}
We have that
$$
R^{\ast}= \frac{1}{c_{\psi,\eta}} R^{\ast}\mathcal{M}_{\eta}J_{1-n}J_{n-1}\mathcal{W}_{\psi}= \frac{1}{c_{\psi,\eta}} \mathcal{R}^{t}_{\eta} J_{n-1} \mathcal{W}_{\psi}
$$
is continuous in view of Theorem \ref{continuity theorem inverse} and Corollary \ref{waveletanalysisS}.
\end{proof}

The mapping (\ref{dualradontest}) allows one to extend the definition of the Radon transform to $\mathcal{S}'_{0}(\mathbb{R}^{n})$.
\begin{de} \label{def4} The Radon transform
\begin{equation} \label{radondistributions}R:\mathcal{S}_{0}'(\mathbb{R}^{n})\to \mathcal{S}_{0}'(\mathbb{S}^{n-1}\times\mathbb{R})
\end{equation}
is defined via (\ref{Radonduality}).
\end{de}
Since  (\ref{radondistributions}) is the transpose of (\ref{dualradontest}), we obtain,
\begin{co}\label{radonc2} The Radon transform is continuous on $\mathcal{S}'_{0}(\mathbb{R}^{n})$.
\end{co}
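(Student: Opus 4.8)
The plan is to establish Corollary~\ref{radonc2} purely by duality, since the hard analytic work has already been done in Corollary~\ref{radonc1}. The statement to prove is that the Radon transform $R:\mathcal{S}_{0}'(\mathbb{R}^{n})\to \mathcal{S}_{0}'(\mathbb{S}^{n-1}\times\mathbb{R})$ of Definition~\ref{def4} is continuous. Recall that this map is \emph{defined} via the duality relation (\ref{Radonduality}), namely $\langle Rf,\varrho\rangle=\langle f,R^{\ast}\varrho\rangle$ for $f\in\mathcal{S}_{0}'(\mathbb{R}^{n})$ and $\varrho\in\mathcal{S}_{0}(\mathbb{S}^{n-1}\times\mathbb{R})$. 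Thus, by construction, $R$ is exactly the transpose ${}^{t}(R^{\ast})$ of the dual Radon transform $R^{\ast}:\mathcal{S}_{0}(\mathbb{S}^{n-1}\times\mathbb{R})\to\mathcal{S}_{0}(\mathbb{R}^{n})$ appearing in (\ref{dualradontest}).

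The single key step is then to invoke the standard functional-analytic fact that the transpose of a continuous linear map between locally convex spaces is continuous for the strong dual topologies (recall that all our distribution spaces are endowed with the strong dual topology, as fixed in Subsection~\ref{spaces}). First I would note that the mapping (\ref{dualradontest}), $R^{\ast}:\mathcal{S}_{0}(\mathbb{S}^{n-1}\times\mathbb{R})\to\mathcal{S}_{0}(\mathbb{R}^{n})$, is continuous by Corollary~\ref{radonc1}. Its transpose therefore maps the strong dual $\mathcal{S}_{0}'(\mathbb{R}^{n})$ continuously into the strong dual $\mathcal{S}_{0}'(\mathbb{S}^{n-1}\times\mathbb{R})$. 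Since $R$ is by definition precisely this transpose, the continuity of $R:\mathcal{S}_{0}'(\mathbb{R}^{n})\to\mathcal{S}_{0}'(\mathbb{S}^{n-1}\times\mathbb{R})$ follows immediately.

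There is essentially no obstacle here, which is why the corollary can be stated without a separate proof; the only point worth verifying is well-definedness, i.e.\ that the right-hand side of (\ref{Radonduality}) indeed produces an element of $\mathcal{S}_{0}'(\mathbb{S}^{n-1}\times\mathbb{R})$ for each fixed $f$. This is automatic: for fixed $f\in\mathcal{S}_{0}'(\mathbb{R}^{n})$, the functional $\varrho\mapsto\langle f,R^{\ast}\varrho\rangle$ is the composition of the continuous linear map $R^{\ast}$ with the continuous functional $f$, hence is a continuous linear functional on $\mathcal{S}_{0}(\mathbb{S}^{n-1}\times\mathbb{R})$. The real substance of the argument has already been absorbed into Corollary~\ref{radonc1}, whose proof factors $R^{\ast}$ through the ridgelet synthesis operator and the wavelet machinery of Corollary~\ref{waveletanalysisS}; once that continuity is in hand, Corollary~\ref{radonc2} is a one-line consequence of transposition.
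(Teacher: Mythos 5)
Your proposal is correct and coincides with the paper's own argument: the paper likewise defines $R$ on $\mathcal{S}_{0}'(\mathbb{R}^{n})$ as the transpose of the continuous map (\ref{dualradontest}) from Corollary~\ref{radonc1} and deduces continuity by transposition with respect to the strong dual topologies. Your additional remark on well-definedness is a harmless elaboration of the same duality mechanism.
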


Notice that the dual Radon transform (\ref{dualradontest}) is surjective \cite{helgason}. Therefore, the Radon transform is injective on $\mathcal{S}'_{0}(\mathbb{R}^{n})$. The restriction of (\ref{radondistributions}) to the subspaces $\mathcal{D}'_{L^{1}}(\mathbb{R}^{n})$, $\mathcal{E}'(\mathbb{R}^{n})$, $\mathcal{O}'_{C}(\mathbb{R}^{n})$, clearly coincides with the Radon transform treated by Hertle in \cite{hertle2}.
\section{ Ridgelet desingularization in $\mathcal{S}'_{0}(\mathbb{R}^{n})$}
\label{section desingularization}
The ridgelet transform of $f\in\mathcal{S}'_{0}(\mathbb{R}^{n})$ is in turn highly regular in ``the variables'' $b$ and $a$. This last section is devoted to prove this fact. We also give a ridgelet desingularization formula and establish the connection between the ridgelet, wavelet, and Radon transforms.

As mentioned in Subsection \ref{spaces}, we have $\mathcal{S}(\mathbb{Y}^{n+1})=\mathcal{D}(\mathbb{S}^{n-1})\hat{\otimes}\mathcal{S}(\mathbb{H})$. The nuclearity of the Schwartz spaces leads to the isomorphisms $\mathcal{S}'(\mathbb{Y}^{n+1})\cong \mathcal{S}'(\mathbb {H}, \mathcal {D}'(\mathbb S^{n-1}))\cong \mathcal{D}'(\mathbb{S}^{n-1},\mathcal{S}'(\mathbb{H}))$, the very last two spaces being spaces of vector-valued distributions \cite{silva,treves}. We shall identify these three spaces and write
\begin{equation}
\label{tensors}
\mathcal{S}'(\mathbb{Y}^{n+1})= \mathcal{S}'(\mathbb {H}, \mathcal {D}'(\mathbb S^{n-1}))=\mathcal{D}'(\mathbb{S}^{n-1},\mathcal{S}'(\mathbb{H})).
\end{equation}
The equality (\ref{tensors}) being realized via the standard identification 
\begin{equation}
\label{tensors2}
\left\langle F,\varphi\otimes \Psi \right\rangle=\left\langle\left\langle F,\Psi \right\rangle,\varphi\right\rangle=  \left\langle \left\langle F,\varphi \right\rangle,\Psi\right\rangle, \     \    \ \Psi\in\mathcal{S}(\mathbb{H}),\ \varphi\in\mathcal{D}(\mathbb{S}^{n-1}),
\end{equation}
Thus, given $F\in\mathcal{S}'(\mathbb{Y}^{n+1})$, the statement $F$ is smooth in $(b,a)$ has the clear interpretation $F\in C^{\infty}(\mathbb{H},\mathcal{D}'(\mathbb{S}^{n-1}))=\mathcal{D}'(\mathbb{S}^{n-1},C^{\infty}(\mathbb{H}))$. Moreover, we shall say that $F\in\mathcal{S}'(\mathbb{Y}^{n+1})$ is a function of slow growth in the variables $(b,a)\in\mathbb{H}$ if 
$
\left\langle F(\mathbf{u},b,a),\varphi(\mathbf{u})\right\rangle_{\mathbf{u}} 
$
is such for every $\varphi\in\mathcal{D}(\mathbb{S}^{n-1})$, namely, it is a function that satisfies the bound $$|\left\langle F(\mathbf{u},b,a),\varphi(\mathbf{u})\right\rangle_{\mathbf{u}} |\leq C\left(a^{s}+\frac{1}{a^{s}}\right)(1+|b|)^{s}, \  \  \  (b,a)\in\mathbb{H},$$
for some positive constants $C=C_{\varphi}$ and $s=s_{\varphi}$.

Notice also that $\mathcal{S}_{0}'(\mathbb{S}^{n-1}\times \mathbb{R})=\mathcal{S}_{0}'(\mathbb{R},\mathcal{D}'(\mathbb{S}^{n-1}))$ (again under the standard identification). This allows us to define the wavelet transform ($\psi\in\mathcal{S}_{0}(\mathbb{R})$),
$$\mathcal{W}_{\psi}:\mathcal{S}_{0}'(\mathbb{S}^{n-1}\times \mathbb{R})=\mathcal{S}_{0}'(\mathbb{R},\mathcal{D}'(\mathbb{S}^{n-1}))\to  \mathcal{S}'(\mathbb {H}, \mathcal {D}'(\mathbb S^{n-1}))=\mathcal{S}'(\mathbb{Y}^{n+1}),$$ 
by  direct application of the formula (\ref{wavelett2}) as a smooth vector-valued function $\mathcal{W}_{\psi}g:\mathbb{H}\to \mathcal{D}'(\mathbb{S}^{n-1})$, for $g\in\mathcal{S}_{0}'(\mathbb{S}^{n-1}\times\mathbb{R})$.  One can also check that this wavelet transform satisfies
\begin{equation}
\label{waveletvector}
\left\langle g,\mathcal{M}_{\overline{\psi}}\Phi\right\rangle=\int_{0}^{\infty}\int_{-\infty}^{\infty} \left\langle \mathcal{W}_{\psi}g(\mathbf{u},b,a),\Phi(\mathbf{u},b,a)\right\rangle_{\mathbf{u}}\frac{dbda}{a},
\end{equation}
for $g\in\mathcal{S}_{0}'(\mathbb{S}^{n-1}\times\mathbb{R})$ and $\Phi\in\mathcal{S}(\mathbb{Y}^{n+1}),$ where $\mathcal{M}_{\overline{\psi}}$ is as in (\ref{synthesisS}) (cf. \cite[Sect. 5 and 8]{PV} for comments on the vector-valued wavelet transform).

The relation between the Radon transform, the wavelet transform, and the ridgelet transform is stated in the following theorem, which also tells us that the ridgelet transform is regular in the location and scale parameters.

\begin{theorem} \label{theorem 1 ridgelet} Let $f \in \mathcal S'_{0}(\mathbb R^{n})$ and $\psi\in\mathcal{S}_{0}(\mathbb{R})$. Then,\begin{equation}
\label{eqdesingular}
\left\langle \mathcal{R}_{\psi}f,\Phi\right\rangle= \int_{0}^{\infty}\int_{-\infty}^{\infty} \left\langle \mathcal{W}_{\psi}(Rf)(\mathbf{u},b,a),\Phi(\mathbf{u},b,a)\right\rangle_{\mathbf{u}} \frac{dbda}{a^{n}}, \  \  \  \Phi\in\mathcal{S}(\mathbb{Y}^{n+1}). 
\end{equation} 
Furthermore, $\mathcal{R_{\psi}}f\in C^{\infty}(\mathbb H, \mathcal D'(\mathbb S^{n-1}))$ and it is of slow growth on $\mathbb{H}$. 
\end{theorem}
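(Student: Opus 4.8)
The plan is to prove the desingularization formula (\ref{eqdesingular}) by transporting $\mathcal{R}_{\psi}f$ to the predual through a single operator identity, and then to read off the regularity assertions directly from (\ref{eqdesingular}). The identity I would use is
\[
\mathcal{R}^{t}_{\overline{\psi}}=R^{\ast}\circ\mathcal{M}_{\overline{\psi}}\circ J_{1-n},
\]
understood as a map $\mathcal{S}(\mathbb{Y}^{n+1})\to\mathcal{S}_{0}(\mathbb{R}^{n})$, where $J_{s}$ is the multiplier (\ref{multiplier}). This is precisely the relation already exploited in the computation of $R^{\ast}$ in the proof of Corollary \ref{radonc1}, and it is checked by a direct application of Fubini's theorem: writing out $R^{\ast}(\mathcal{M}_{\overline{\psi}}(J_{1-n}\Phi))(\mathbf{x})$, using $\tfrac{1}{a}\overline{\psi}\big(\tfrac{\mathbf{x}\cdot\mathbf{u}-b}{a}\big)=\overline{\psi}_{\mathbf{u},b,a}(\mathbf{x})$ together with $a^{1-n}\tfrac{dbda}{a}d\mathbf{u}=\tfrac{dbdad\mathbf{u}}{a^{n}}$, converts the iterated integral into the defining integral (\ref{synthesis}) of $\mathcal{R}^{t}_{\overline{\psi}}\Phi$; the interchange is licensed by the rapid decay of $\Phi$ and $\psi$. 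I would also record that every intermediate object lands in the correct space: $J_{1-n}\Phi\in\mathcal{S}(\mathbb{Y}^{n+1})$, and, since $\overline{\psi}\in\mathcal{S}_{0}(\mathbb{R})$, Corollary \ref{waveletanalysisS}$(iii)$ gives $\mathcal{M}_{\overline{\psi}}(J_{1-n}\Phi)\in\mathcal{S}_{0}(\mathbb{S}^{n-1}\times\mathbb{R})$, which is exactly the domain of the dual Radon transform (\ref{dualradontest}).

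With the identity in hand, the formula follows by chaining dualities: for $f\in\mathcal{S}'_{0}(\mathbb{R}^{n})$ and $\Phi\in\mathcal{S}(\mathbb{Y}^{n+1})$,
\[
\langle\mathcal{R}_{\psi}f,\Phi\rangle
=\langle f,\mathcal{R}^{t}_{\overline{\psi}}\Phi\rangle
=\langle f,R^{\ast}(\mathcal{M}_{\overline{\psi}}(J_{1-n}\Phi))\rangle
=\langle Rf,\mathcal{M}_{\overline{\psi}}(J_{1-n}\Phi)\rangle,
\]
where the three steps invoke Definition \ref{def1}, the operator identity, and the defining duality (\ref{Radonduality}) of the Radon transform on $\mathcal{S}'_{0}$ (Definition \ref{def4}), respectively. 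Finally I would apply the vector-valued wavelet relation (\ref{waveletvector}) with $g=Rf\in\mathcal{S}'_{0}(\mathbb{S}^{n-1}\times\mathbb{R})$ and with $J_{1-n}\Phi$ in place of $\Phi$; pulling the scalar $a^{1-n}$ out of the sphere pairing turns the weight $\tfrac{dbda}{a}$ into $\tfrac{dbda}{a^{n}}$ and produces exactly the right-hand side of (\ref{eqdesingular}).

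It remains to extract the regularity. Under the vector-valued identification (\ref{tensors})--(\ref{tensors2}), the formula (\ref{eqdesingular}) says precisely that $\mathcal{R}_{\psi}f\in\mathcal{S}'(\mathbb{Y}^{n+1})=\mathcal{S}'(\mathbb{H},\mathcal{D}'(\mathbb{S}^{n-1}))$ is obtained by integrating the $\mathcal{D}'(\mathbb{S}^{n-1})$-valued function $(b,a)\mapsto\mathcal{W}_{\psi}(Rf)(\cdot,b,a)$ against $\tfrac{dbda}{a^{n}}$. As recalled before the theorem, this wavelet transform is a smooth $\mathcal{D}'(\mathbb{S}^{n-1})$-valued function on $\mathbb{H}$, whence $\mathcal{R}_{\psi}f\in C^{\infty}(\mathbb{H},\mathcal{D}'(\mathbb{S}^{n-1}))$. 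For the slow growth I would fix $\varphi\in\mathcal{D}(\mathbb{S}^{n-1})$, put $g_{\varphi}:=\langle Rf(\mathbf{u},\cdot),\varphi(\mathbf{u})\rangle_{\mathbf{u}}\in\mathcal{S}'_{0}(\mathbb{R})$, and observe that $\langle\mathcal{R}_{\psi}f(\mathbf{u},b,a),\varphi(\mathbf{u})\rangle_{\mathbf{u}}=\mathcal{W}_{\psi}g_{\varphi}(b,a)$; bounding this pairing by a fixed continuous $\mathcal{S}_{0}(\mathbb{R})$-seminorm of the scaled wavelet $\tfrac{1}{a}\overline{\psi}\big(\tfrac{\,\cdot\,-b}{a}\big)$, and estimating that seminorm polynomially in $b$ and by a power of $a+a^{-1}$, yields the desired bound. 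I expect the main obstacle to be bookkeeping rather than conceptual: one must keep the exponent $1-n$ and the two measure conventions (the $a^{-n}$ on $\mathbb{Y}^{n+1}$ versus the $a^{-1}$ in (\ref{waveletvector})) consistent at every step and check that each test object produced lands in the space where the next duality is defined; the final growth estimate is the standard one from distributional wavelet analysis.
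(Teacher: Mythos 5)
Your proposal is correct and is essentially the paper's own proof: the paper establishes (\ref{eqdesingular}) by precisely the chain you describe, namely applying (\ref{waveletvector}) with $J_{1-n}\Phi$ to get $\left\langle Rf,\mathcal{M}_{\overline{\psi}}J_{1-n}\Phi\right\rangle$, then Radon duality to get $\left\langle f,R^{\ast}\mathcal{M}_{\overline{\psi}}J_{1-n}\Phi\right\rangle$, then the identity $R^{\ast}\mathcal{M}_{\overline{\psi}}J_{1-n}=\mathcal{R}^{t}_{\overline{\psi}}$ (used implicitly, as in the proof of Corollary \ref{radonc1}) together with Definition \ref{def1}, and it likewise deduces smoothness and slow growth from the corresponding properties of the vector-valued wavelet transform. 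Your write-up only differs in running the chain in the opposite direction and in making explicit details the paper leaves tacit (the Fubini verification of the operator identity, the range check via Corollary \ref{waveletanalysisS}$(iii)$, and the seminorm estimate for slow growth), all of which are sound.
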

\begin{proof}
That $\mathcal{R}_{\psi}$ is smooth and of slow growth in the variables $b,a$  follows from (\ref{eqdesingular}) and the corresponding property for the wavelet transform. Let us show (\ref{eqdesingular}). The multiplier operator $J_{s}$ was introduced in (\ref{multiplier}). By (\ref{waveletvector}),
\begin{align*}
\int_{0}^{\infty}\int_{-\infty}^{\infty} \left\langle \mathcal{W}_{\psi}(Rf)(\mathbf{u},b,a),\Phi(\mathbf{u},b,a)\right\rangle_{\mathbf{u}} \frac{dbda}{a^{n}}&= \left\langle Rf, (\mathcal{M}_{\overline{\psi}}J_{1-n})\Phi \right\rangle
\\
&
=\left\langle f, (R^{\ast}\mathcal{M}_{\overline{\psi}}J_{1-n})\Phi\right\rangle
\\
&=
\langle f, \mathcal{R}^{t}_{\overline{\psi}}\Phi\rangle
\\
&
=
 \left\langle \mathcal{R}_{\psi} f, \Phi\right\rangle.
\end{align*}

\end{proof}
It should be emphasized that the relation (\ref{eqdesingular}) is consistent with the ridgelet transform of test functions, as follows from Theorem \ref{consistence} and (\ref{rad-rid}).

We end this article with a desingularization formula, a corollary of Theorem \ref{theorem 1 ridgelet}. The next result generalizes the extended Parseval's relation obtained in Proposition \ref{proposition Parseval}. 

\begin{co}[Ridgelet desingularization]\label{co desing} Let $f\in \mathcal{S}'_{0}(\mathbb{R}^{n})$ and let $\psi\in\mathcal{S}_{0}(\mathbb{R})$ be non-trivial. If $\eta\in\mathcal{S}_{0}(\mathbb{R})$ is a reconstruction neuronal activation function for $\psi$, then
\begin{equation}
\label{eq desingular 2}
\langle f,\phi\rangle= \frac{1}{K_{\psi,\eta}}\int_{0}^{\infty}\int_{\mathbb{R}} \left\langle \mathcal{W}_{\psi}(Rf)(\mathbf{u},b,a),\mathcal{R}_{\overline{\eta}}\:\phi (\mathbf{u},b,a)\right\rangle_{\mathbf{u}} \frac{dbda}{a^{n}},
\end{equation}
for all $\phi\in\mathcal{S}_{0}(\mathbb{R}^{n})$.
\end{co}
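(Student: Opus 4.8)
The plan is to obtain (\ref{eq desingular 2}) as a direct concatenation of the distributional inversion formula (Theorem \ref{reconstruction distributions}), the definition of the synthesis operator on $\mathcal{S}'(\mathbb{Y}^{n+1})$ (Definition \ref{def3}), and the desingularization identity (\ref{eqdesingular}) of Theorem \ref{theorem 1 ridgelet}. Since the statement is already labelled a corollary of Theorem \ref{theorem 1 ridgelet}, I do not expect to need any genuinely new estimate; the whole task is to route these three results through the correct test function.

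First I would fix $\phi\in\mathcal{S}_{0}(\mathbb{R}^{n})$ and use Theorem \ref{reconstruction distributions}, which asserts $\operatorname{id}_{\mathcal{S}_{0}'(\mathbb{R}^{n})}=K_{\psi,\eta}^{-1}(\mathcal{R}_{\eta}^{t}\circ\mathcal{R}_{\psi})$, to rewrite $\langle f,\phi\rangle=K_{\psi,\eta}^{-1}\langle\mathcal{R}_{\eta}^{t}(\mathcal{R}_{\psi}f),\phi\rangle$. I would then unfold the outer synthesis operator by Definition \ref{def3} (with $\eta$ in place of $\psi$), obtaining $\langle\mathcal{R}_{\eta}^{t}(\mathcal{R}_{\psi}f),\phi\rangle=\langle\mathcal{R}_{\psi}f,\mathcal{R}_{\overline{\eta}}\phi\rangle$, which is legitimate precisely because $\mathcal{R}_{\overline{\eta}}\phi$ is an element of the test space $\mathcal{S}(\mathbb{Y}^{n+1})$. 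This membership is the one point that actually has to be invoked, and it is guaranteed by the continuity Theorem \ref{continuity theorem ridgelet} applied to $\overline{\eta}\in\mathcal{S}_{0}(\mathbb{R})$ and $\phi\in\mathcal{S}_{0}(\mathbb{R}^{n})$.

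With $\Phi:=\mathcal{R}_{\overline{\eta}}\phi\in\mathcal{S}(\mathbb{Y}^{n+1})$ now a bona fide test function, I would apply (\ref{eqdesingular}) verbatim to evaluate $\langle\mathcal{R}_{\psi}f,\Phi\rangle$ as the iterated integral $\int_{0}^{\infty}\int_{-\infty}^{\infty}\langle\mathcal{W}_{\psi}(Rf)(\mathbf{u},b,a),\Phi(\mathbf{u},b,a)\rangle_{\mathbf{u}}\,a^{-n}dbda$. Substituting back $\Phi=\mathcal{R}_{\overline{\eta}}\phi$ and carrying along the factor $K_{\psi,\eta}^{-1}$ produces (\ref{eq desingular 2}) exactly. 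I expect no real obstacle here: the argument is a formal chaining of earlier results, and the only substantive verification is the test function membership $\mathcal{R}_{\overline{\eta}}\phi\in\mathcal{S}(\mathbb{Y}^{n+1})$, already settled by Theorem \ref{continuity theorem ridgelet}.

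Finally, I would append a sentence explaining why this generalizes Proposition \ref{proposition Parseval}: when $f$ and $\phi=g$ are regular enough, the relation (\ref{rad-rid}) identifies $\mathcal{W}_{\psi}(Rf)(\mathbf{u},b,a)$ with the function $\mathcal{R}_{\psi}f(\mathbf{u},b,a)$, and the vector-valued pairing $\langle\,\cdot\,,\,\cdot\,\rangle_{\mathbf{u}}$ reduces to integration $\int_{\mathbb{S}^{n-1}}\cdots\, d\mathbf{u}$ against $\mathcal{R}_{\overline{\eta}}g$, so that (\ref{eq desingular 2}) collapses to the extended Parseval relation (\ref{parseval}).
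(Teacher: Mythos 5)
Your proof is correct and takes essentially the same route as the paper: both arguments use Theorem \ref{reconstruction distributions} together with the duality definitions to reduce $\langle f,\phi\rangle$ to $K_{\psi,\eta}^{-1}\left\langle \mathcal{R}_{\psi}f,\mathcal{R}_{\overline{\eta}}\:\phi\right\rangle$ and then conclude by applying \eqref{eqdesingular} with $\Phi=\mathcal{R}_{\overline{\eta}}\:\phi$. Your explicit check that $\mathcal{R}_{\overline{\eta}}\:\phi\in\mathcal{S}(\mathbb{Y}^{n+1})$ (via Theorem \ref{continuity theorem ridgelet}) is the only ingredient the paper leaves implicit, and it is indeed the correct justification.
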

\begin{proof} By Theorem \ref{reconstruction distributions},
$$
\left\langle f,\phi\right\rangle= \frac{1}{K_{\psi,\eta}} \left\langle f,\mathcal{R}^{t}_{\overline{\psi}}\mathcal{R}_{\overline{\eta}}\:\phi\right\rangle=\frac{1}{K_{\psi,\eta}}\left\langle \mathcal{R}_{\psi}f,\mathcal{R}_{\overline{\eta}}\: \phi\right\rangle.
$$
The desingularization formula (\ref{eq desingular 2}) follows then from (\ref{eqdesingular}).
\end{proof}

\end{document}